\title{Moments of Two-Variable Functions and the Uniqueness
of Graph Limits}
\date{December 2008}
\newtheorem{theorem}{Theorem}[section]
\newtheorem{lemma}[theorem]{Lemma}
\newtheorem{claim}{Claim}
\newtheorem{corollary}[theorem]{Corollary}
\newtheorem{example}{Example}
\newenvironment{proof}{\medskip\noindent{\bf Proof. }}{\hfill$\square$\medskip}
\begin{document}

\def\R{{\mathbb R}}
\def\one{{\mathbf 1}}
\def\Q{{\mathbb Q}}
\def\Z{{\mathbb Z}}
\def\C{{\mathbb C}}
\def\N{{\mathbb N}}
\def\hom{{\rm hom}}
\def\Hom{{\rm Hom}}
\def\Inj{{\rm Inj}}
\def\Ind{{\rm Ind}}
\def\inj{{\rm inj}}
\def\ind{{\rm ind}}
\def\id{{\rm id}}
\def\sur{{\rm sur}}
\def\PAG{{\rm PAG}}
\def\io{{1\to\infty}}
\def\CUT{{\text{\rm CUT}}}
\def\eps{\varepsilon}
\long\def\killtext#1{}
\def\dd{{d_{\text{\rm set}}}}
\def\dl{{d_{\text{\rm left}}}}
\def\dr{{d_{\text{\rm right}}}}

\def\ontop#1#2{\genfrac{}{}{0pt}{}{#1}{#2}}

\def\AA{{\cal A}}\def\BB{{\cal B}}\def\CC{{\cal C}}\def\DD{{\cal D}}\def\EE{{\cal E}}\def\FF{{\cal F}}
\def\GG{{\cal G}}\def\HH{{\cal H}}\def\II{{\cal I}}\def\JJ{{\cal J}}\def\KK{{\cal K}}\def\LL{{\cal L}}
\def\MM{{\cal M}}\def\NN{{\cal N}}\def\OO{{\cal O}}\def\PP{{\cal P}}\def\QQ{{\cal Q}}\def\RR{{\cal R}}
\def\SS{{\cal S}}\def\TT{{\cal T}}\def\UU{{\cal U}}\def\VV{{\cal V}}\def\WW{{\cal W}}\def\XX{{\cal X}}
\def\YY{{\cal Y}}\def\ZZ{{\cal Z}}

\def\bA{{\mathbf A}}
\def\bB{{\mathbf B}}

\def\Prob{{\sf Prob}}
\def\Pr{{\sf P}}
\def\Qr{{\sf Q}}
\def\E{{\sf E}}
\def\Var{{\sf Var}}
\def\T{^{\sf T}}

\def\wt{\widetilde}
\def\tr{{\rm tr}}
\def\cost{\hbox{\rm cost}}
\def\val{\hbox{\rm val}}
\def\rk{{\rm rk}}
\def\diam{{\rm diam}}
\def\Ker{{\rm Ker}}

\author{Christian Borgs\\
Jennifer Chayes\\[6pt]
Microsoft Research, Cambridge, MA\\
L\'aszl\'o Lov\'asz\\[6pt]
Institute of Mathematics, E\"otv\"os Lor\'and University, Budapest, Hungary
}

\maketitle

\begin{abstract}
For a symmetric bounded measurable function $W$ on $[0,1]^2$ and a
simple graph $F$, the homomorphism density
\[
t(F,W)=\int_{[0,1]^{V(F)}} \prod_{ij\in E(F)} W(x_i,x_j)\,dx\,.
\]
can be thought of as a ``moment'' of $W$. We prove that
every such function is determined by its moments up to a measure
preserving transformation of the variables.

The main motivation for this result comes from the theory of
convergent graph sequences. A sequence $(G_n)$ of dense graphs is
said to be convergent if the probability, $t(F,G_n)$, that a random
map from $V(G_n)$ into $V(F)$ is a homomorphism converges for every
simple graph $F$. The limiting density can be expressed as $t(F,W)$
for a symmetric bounded measurable function $W$ on $[0,1]^2$. Our
results imply in particular that the limit of a convergent graph
sequence is unique up to measure preserving transformation.
\end{abstract}

\tableofcontents

\addtolength{\baselineskip}{3pt}
\section{Introduction}

Let $\WW$ be the set of bounded symmetric measurable functions
$W:~[0,1]^2\to\R$, and let $\WW_0$ denote the set of functions in
$\WW$ with values in $[0,1]$. For every $W\in \WW$ and every finite
graph $F$, we define the integral
\begin{equation} \label{tFW-def} t(F,W)=\int\limits_{[0,1]^n}
\prod_{ij\in E(F)} W(x_i,x_j)\,\prod_{i\in V(F)}dx_i\,.
\end{equation}
Our interest in these integrals stems from graph theory (see next
paragraph), but such integrals appear in physics, statistics, and
other areas. In many respects, these integrals can be thought of as
2-variable analogues of moments of 1-variable functions, so instead
of moment sequences, such $2$-variable functions have a "moment graph
parameter" (function defined on graphs). Just like moments of a
1-variable function determine the function up to measure preserving
transformations, these ``moments'' determine the 2-variable function
up to measure preserving transformations. The exact formulation and
proof of this fact is the main goal of this paper.

Our main motivation for this study comes from the theory of
convergent graph sequences. Let $F$ and $G$ be two simple graphs
(graphs without loops and multiple edges). Let us map the nodes of
$F$ randomly into $V(G)$, and let $t(F,G)$ denote the probability
that this map preserves adjacency. For example, $t(K_2,G)$ denotes
the edge density of $G$. In general, we call $t(F,G)$ the {\it
homomorphism density} or simply the {\it density} of $F$ in $G$.

We call a sequence of simple graphs $(G_n)$ {\it convergent}, if
$t(F,G_n)$ has a limit for every simple graph $F$. The notion of
convergent graph sequences was introduced by Borgs, Chayes, Lov\'asz,
S\'os and Vesztergombi \cite{BCLSV-unpub}, see also \cite{BCLSV-rev},
and further studied in \cite{dense1} and \cite{dense2}. Lov\'asz and
Szegedy \cite{LSz} proved that every convergent graph sequence has a
``limit object'' in the form of a function $W\in\WW_0$ in the sense
that
\begin{equation}\label{GTOW}
t(F,G_n)\longrightarrow t(F,W)\qquad \text{as} \quad n\to\infty
\end{equation}
for every simple graph $F$. In this case we say that {\it $G_n$
converges to} $W$. It was also shown in \cite{LSz} that for every
function $W\in\WW_0$ there is a convergent sequence $(G_n)$ of simple
graphs converging to $W$. To complete the picture, the results in
this paper imply that {\it the limit object is unique up to measure
preserving transformations}.

\section{Results}

For the precise statement of our results, we need some definitions.
Instead of the interval $[0,1]$, we consider two-variable functions
on an arbitrary probability space; while this does not add real
generality it leads to a cleaner picture.  We need a few definitions.

We start by recalling some basic notions from probability theory. Let
$(\Omega,\AA,\pi)$ be a probability space (where $\Omega$ is the
underlying set, $\AA$ is a $\sigma$-algebra on $\Omega$, and $\pi$ is
a probability measure on $\AA$). As usual, $(\Omega,\AA,\pi)$ is
called complete if $\AA$ contains all sets of external measure $0$,
and the completion of $(\Omega,\AA,\pi)$ is obtained by replacing
$\AA$ with the $\sigma$-algebra generated by $\AA$ and all subsets
$N\subset \Omega$ of external measure $0$.

Let $(\Omega,\AA,\pi)$ and $(\Omega',\AA',\pi')$ be probability
spaces, and let $\phi$ be a measure preserving map from $\Omega$ to
$\Omega'$. The map $\phi$ is called an {\it isomorphism} if it is a
bijection between $\Omega$ and $\Omega'$ and both $\phi$ and
$\phi^{-1}$ are  measure preserving, and it is called an {\it
isomorphism mod $0$} if there are null sets $N\in\AA$ and $N'\in\AA'$
such that the restriction of $\phi$ to $\Omega\setminus N$ is an
isomorphism between $\Omega\setminus N$ and $\Omega'\setminus N'$
(equipped with the suitable restrictions of $(\AA,\pi)$ and
$(\AA',\pi')$, respectively). In the last case $(\Omega,\AA,\pi)$ and
$(\Omega',\AA',\pi')$ are called {\it isomorphic mod $0$}.

It turns out that several of our results require a little bit more
structure than that of an arbitrary probability space.  In
particular, we will consider Lebesgue spaces, i.e., complete
probability spaces that are isomorphic mod $0$ to the disjoint union
of a closed interval (equipped with the standard Lebesgue sets and
Lebesgue measure) and a countable set of atoms.%
\footnote{See \cite{Roh}, Section 2.2 for an axiomatic definition of
Lebesgue spaces, and Section 2.4 for the proof that a probability
space is Lebesgue if and only if it is isomorphic mod 0 to the
disjoint union of a closed interval and a countable set of atoms.}

\subsection{Graphons and Graph Densities}
\label{sec:Graphons}

We are now ready to introduce the main objects studied in
this paper.

Starting from an arbitrary probability space $(\Omega,\AA,\pi)$, let
$W:~\Omega\times\Omega\to\R$ be a bounded, symmetric function
measurable with respect to the completion of
$(\Omega\times\Omega,\AA\times\AA,\pi\times\pi)$. We call the
quadruple $H=(\Omega,\AA,\pi,W)$ a {\it graphon}, and refer to $W$ as
{\it a graphon on the probability space $(\Omega,\AA,\pi)$.} (As
discussed above, such functions can be thought of as limits
convergent graph sequences, which explains the name).

From our point of view, graphons obtained by changing $W$ on a set of
measure $0$, or changing the $\sigma$-algebra $\AA$ so that $W$
remains measurable, do not differ essentially from the original.
However, for technical reasons we have to distinguish them. We say
that a graphon is {\it strong}, if $W$ is measurable with respect to
$\AA\times\AA$ (not just the completion of it). We can always change
$W$ on a set of measure $0$ to make the graphon strong (Theorem
\ref{thm:REDUCE}(i)).

We say that $H$ is {\it complete}, if the underlying probability
space is complete, and we say that it is {\it Lebesguian}, if the
underlying probability space is a Lebesgue space. The {\it
completion}, $\overline H$, of $H$ is obtained by completing the
underlying probability space, i.e., by replacing $\AA$ by its
completion $\overline\AA$.

Let $H=(\Omega,\AA,\pi,W)$ be a  graphon, and let $F$ be a finite
graph with $V(F)=\{1,\dots,k\}$. The definition \eqref{tFW-def} then
can be extended as
\begin{equation}\label{T-DEF}
t(F,H)=\int\limits_{\Omega^k} \prod_{ij\in E(F)}
W(x_i,x_j)\,\prod_{i=1}^k d\pi(x_i).
\end{equation}
Let $H=(\Omega,\AA,\pi,W)$ and $H'=(\Omega',\AA',\pi',W')$ be two
graphons. The goal of this paper is to determine necessary and
sufficient conditions under which
\begin{equation}\label{MAINCOND}
t(F,H)=t(F,H')
\end{equation}
for all graphs $F$.

To this end, we will introduce two different notions of isomorphism.
Both will be expressed in terms of the following operation: given a
graphon $H'=(\Omega',\AA',\pi',W')$ and a measure preserving map
$\phi$ from a probability space $(\Omega,\AA,W)$ into
$(\Omega',\AA',\pi')$, let $(W')^\phi$ be ``pull-back'' of $W'$,
defined by $(W')^\phi(x,y)=W(\phi(x),\phi(y))$.
If $H=(\Omega,\AA,\pi,W)$ and $G=(\Gamma,\BB,\rho,U)$ are two
graphons and $\phi:~\Omega\to Gamma$ is measure preserving from the
completion $\overline{\AA}$ into $\BB$ such that $W=U^\phi$ almost
everywhere, then we call $\phi$ a {\it weak isomorphism} from $H$ to
$G$. Note that a weak isomorphism is not necessarily invertible.

We say that $H$ and $H'$ are {\it isomorphic mod 0} (in notation
$H'\cong H'$), if there exists a map $\phi:~\Omega\to\Omega'$ such
that $\phi$ is an isomorphism mod 0 and $(W')^\phi=W$ almost
everywhere in $\Omega\times\Omega$. For simplicity, we often drop the
qualifier mod 0.

We call $H$ and $H'$ {\it weakly isomorphic} if there is a third
graphon $G$ and weak isomorphisms from $H$ and $H'$ into $G$. It will
follow from Theorems \ref{thm:REDUCE} and \ref{thm:Main} that we
could require here that $G$ is a strong Lebesguian graphon.

The isomorphism relation $\cong$ is clearly an equivalence relation,
and it will follow from Theorem~\ref{thm:Main} (ii) below that weak
isomorphism is an equivalence relation as well. Every graphon is
weakly isomorphic with its completion, and every pair of isomorphic
graphons is weakly isomorphic. It is clear that if two graphons $H$
and $H'$ are weakly isomorphic then \eqref{MAINCOND} holds for every
graph $H$. Theorem~\ref{thm:Main} (ii) below will show that the
converse also holds.

To state our results, we need one more notion, the notion of {\it
twins}.  Let $H=(\Omega,\AA,\pi,W)$ be a graphon. Two points
$x_1,x_2\in\Omega$ are called twins if $W(x_1,y)=W(x_2,y)$ for almost
all $y\in\Omega$. Note that relation of being twins is an equivalence
relation. We call the graphon $H$ {\it almost twin-free} if all there
exists a set $N$ of measure zero such that no two points in
$\Omega\setminus N$ are twins.

\subsection{Main results}\label{sec:MAIN}

With these definitions, we can state our main result:

\begin{theorem}\label{thm:Main}
{\rm (i)} If $H$ and $H'$ are almost twin-free Lebesguian graphons,
then \eqref{MAINCOND} holds for every simple graph $F$ if and only if
$H\cong H'$.

\smallskip

{\rm (ii)} If  $H$ and $H'$ are general graphons, then
\eqref{MAINCOND} holds for every simple graph $F$ if and only if $H$
and $H'$ are weakly isomorphic.
\end{theorem}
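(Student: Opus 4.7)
The plan is to handle both parts through a two-step reduction: first reduce part (ii) to part (i) via a quotient-by-twins construction, then establish part (i) directly by a sampling and reconstruction argument. The easy directions in both parts are essentially a change of variables: if $\phi:\Omega\to\Omega'$ is measure preserving and $W=(W')^\phi$ almost everywhere, then Fubini gives $t(F,H)=t(F,H')$, and the weakly isomorphic case follows by applying this twice through the common graphon $G$. An isomorphism mod $0$ is a special case of a weak isomorphism, which covers the ``if'' direction of (i) as well.

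For the hard direction, I would first observe that the agreement of $t(F,\cdot)$ on all simple graphs $F$ is equivalent, via a standard moment-problem argument for bounded random variables combined with Kolmogorov extension, to the agreement of the joint distributions of the infinite random symmetric arrays $M_H:=(W(X_i,X_j))_{i,j\ge 1}$ and $M_{H'}:=(W'(X'_i,X'_j))_{i,j\ge 1}$, where the $X_i$ and $X'_i$ are i.i.d.\ samples from $\pi$ and $\pi'$. The rest of the argument extracts structural information from this distributional identity.

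Reduction of (ii) to (i): define the twin equivalence $x\sim y$ to mean $W(x,\cdot)=W(y,\cdot)$ a.e., and construct the quotient graphon $\widetilde H$ on $\Omega/{\sim}$ equipped with the pushforward measure and the induced function $\widetilde W$. Using Theorem~\ref{thm:REDUCE} together with the observation that the map $x\mapsto W(x,\cdot)$ into $L^1(\Omega)$ has a measurable standard-Borel image, one can arrange that $\widetilde H$ is an almost twin-free Lebesguian graphon and that the quotient map is a weak isomorphism $H\to\widetilde H$ preserving all densities. Performing the same construction on $H'$ and invoking part (i) to conclude $\widetilde H\cong\widetilde{H'}$ then yields a common graphon (namely $\widetilde H$) witnessing the weak isomorphism of $H$ and $H'$.

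For part (i) itself, the goal is to upgrade equality in distribution of $M_H$ and $M_{H'}$ to an actual measure preserving bijection mod $0$ $\phi:\Omega\to\Omega'$ with $W=(W')^\phi$ a.e. The key is a sampling-based identification principle: in an almost twin-free graphon, a point $x$ is determined modulo a null set by its neighborhood profile $W(x,\cdot)$, and by the strong law this profile is recovered $\pi^{\mathbb N}$-almost surely from the sample sequence $(W(x,X_j))_{j\ge 1}$ via empirical distributions. Applying this on both sides and exploiting the equality of the sample-array distributions produces, via a coupling argument, a measurable random identification of the $i$-th points on the two sides; a disintegration using the Lebesgue structure then converts this random identification into a deterministic measure preserving map $\phi$ with the required properties. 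The main obstacle is precisely this final disintegration step: turning a coupling of random samples into a \emph{deterministic} measure preserving bijection mod $0$ relies crucially on the Lebesgue hypothesis (Rokhlin's structure theorem and measurable selection), while a secondary technical difficulty is verifying measurability of the twin quotient in the reduction from (ii) to (i), which is why we pass through the $L^1$ representative $x\mapsto W(x,\cdot)$ and the strong-graphon reduction in Theorem~\ref{thm:REDUCE}.
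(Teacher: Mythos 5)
Your overall strategy tracks the paper's closely: the ``if'' directions are Fubini arguments, part (ii) is reduced to part (i) through the twin quotient together with Theorem~\ref{thm:REDUCE}, and part (i) is attacked by sampling random anchor points to tag each $x\in\Omega$ by the sequence $(W(x,a_1),W(x,a_2),\dots)$, coupling the two anchor sequences, and then using the twin-free and Lebesgue hypotheses to upgrade this to a genuine isomorphism mod $0$. This is precisely the architecture of the paper's Lemmas~\ref{WW}, \ref{ANCHOR-ISO}, \ref{KEY}, and \ref{LEM:COUPLE}.

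There is, however, a genuine gap at the very first step of the hard direction. You assert that equality of $t(F,\cdot)$ over all \emph{simple} graphs $F$ is, ``via a standard moment-problem argument for bounded random variables combined with Kolmogorov extension,'' equivalent to equality in distribution of the sampled arrays $M_H=(W(X_i,X_j))$ and $M_{H'}=(W'(X'_i,X'_j))$. The mixed moments that determine the distribution of such a bounded array are $\E\bigl[\prod_{i<j}W(X_i,X_j)^{k_{ij}}\bigr]$, i.e., exactly the \emph{multigraph} densities $t(F,H)$ where $F$ has $k_{ij}$ parallel edges between $i$ and $j$. So your claimed equivalence already presupposes that equality of simple graph densities implies equality of multigraph densities. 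This is not a standard moment-problem fact and is not automatic: for a $\{0,1\}$-valued kernel $W^2=W$, but for a general graphon $\int W^2 \ne \int W$, and the cut-metric/counting-lemma machinery that controls simple-graph densities does not control multigraph densities. In the paper this implication is a separate, non-trivial step (Lemma~\ref{lem:MULTIPLE}), whose proof subdivides a multiple edge into a path, expands $W$ spectrally as a compact Hilbert--Schmidt integral operator, and extracts the needed cancellation from the eigenvalue expansion of $W^k$. Without supplying this step, the passage from agreement on simple graph densities to agreement of the array distributions — on which everything downstream in your argument rests — is unjustified. The remainder of the plan (coupling anchor sequences so the tags match, twin-freeness giving a.e.\ injectivity, Rokhlin's theory giving a measurable a.e.\ inverse) is sound and matches the paper.
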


A natural idea of the proof of Theorem \ref{thm:Main} is the
following: can we bring a graphon $(\Omega,\AA,\pi,W)$ to a
``canonical form'', so that isomorphic or weakly isomorphic graphons
would have identical canonical forms? In the case of functions in a
single variable, this is possible, through ``monotonization'': for
every bounded real function on $[0,1]$ there is an unique monotone
increasing left-continuous function on $[0,1]$ that has the same
moments.

In Section \ref{sec:Canonical} we'll construct not quite a canonical
form, but a ``canonical ensemble'', a probability distribution
$(H_\alpha)$ of graphons on the same $\sigma$-algebra such that
$H\cong H_\alpha$ for almost all $\alpha$, and two graphons are
isomorphic if and only if their ensembles can be coupled so that
corresponding graphons are identical (up to sets of measure $0$).

An important element of the proof is a curious measure-theoretic
fact. Consider a 2-variable function for which all 1-variable
functions obtained by fixing one of the variables are measurable.
This of course does not in general imply that the 2-variable function
is measurable, but it does imply it in some circumstances (see e.g.
Corollary \ref{L-MEAS-1}).

As we will see, the second statement of Theorem \ref{thm:Main} can
easily be deduced from the first. In fact, we'll show that {\it every
graphon is weakly isomorphic to a twin-free Lebesguian graphon.} (See
Theorem \ref{thm:REDUCE} for more details of this isomorphism.)

We can also transform a Lebesguian graphon into a graphon whose
underlying probability space is the unit interval with the Lebesgue
measure, by ``resolving'' the atoms into intervals of the appropriate
length. This form is the most elementary and therefore useful in
applications; however, it is not so convenient for the purposes of
this paper because we loose twin-freeness.

It is easy to see that if $H$ and $H'$ are weakly isomorphic, then
\eqref{MAINCOND} holds not only for simple graphs $F$ but also for
graphs with multiple edges (which we'll call {\it multigraphs} if we
want to emphasize that multiple edges are allowed; but we don't allow
loops). Thus \eqref{MAINCOND} for simple graphs implies this equation
for multigraphs. (This fact will be an important step in the proof,
see Section \ref{MULTIPLE}.)

We can formulate our results in a probabilistic way. Recall that a
{\it coupling} between two probability spaces $(\Omega,\AA,\pi)$ and
$(\Omega',\AA',\pi')$ is a probability distribution on
$\AA\times\AA'$ whose marginals are $\pi$ and $\pi'$, respectively. A
coupling between two graphons means a coupling between their
underlying probability spaces. Let $H=(\Omega,\AA,\pi,W)$ be a
graphon, and let $X_1,\dots,X_n$ be independent random samples from
$\pi$. Then we have
\[
t(F,H)=\E\bigl(\prod_{ij\in E(F)} W(X_i,X_j)\bigr).
\]
Let $H=(\Omega,\AA,\pi,W)$ and $H'=(\Omega',\AA',\pi',W')$ two
graphons, and suppose that there exists a coupling $\gamma$ between
them such that $W(X_1,Y_1)=W'(X_2,Y_2)$ holds with probability $1$
for two independent samples $(X_1,X_2)$ and $(Y_1,Y_2)$ from
$\gamma$. In this case clearly (\ref{MAINCOND}) holds for every graph
$F$. As we will see, Theorem \ref{thm:Main} implies that in the
Lebesguian case the converse also holds.

We sum up the results for the most important special case of
functions in $\WW$, i.e., bounded, symmetric functions $W:[0,1]^2\to
\R$ which are measurable with respect to the Lebesgue sets on
$[0,1]^2$ (the Corollary would remain valid for arbitrary Lebesguian
graphons, but this would not be essentially more general).

\begin{corollary}\label{cor:UNIQUENESS}
For two functions $W,W'\in \WW$ the following are equivalent.

\smallskip

{\rm (a)} For every simple graph $F$, $t(F,W')=t(F,W)$.

\smallskip

{\rm (b)} For every multigraph $F$, $t(F,W')=t(F,W)$.

\smallskip

{\rm (c)} There exists a function $U\in\WW$ and two measure
preserving maps $\varphi,\psi:~[0,1]\to[0,1]$ such that
$W=U^{\varphi}$ and $W'=U^{\psi}$ almost everywhere.

\smallskip

{\rm (d)} There exist two measure preserving maps
$\varphi,\psi:~[0,1]\to[0,1]$ such that $(W')^\varphi=W^\psi$ almost
everywhere.

\smallskip

{\rm (e)} There exists a probability measure $\gamma$ on
$[0,1]\times[0,1]$ such that each marginal of $\gamma$ is the
Lebesgue measure, and if $(X,X')$ and $(Y,Y')$ are two independent
samples from $\gamma$, then $W(X,Y)=W'(X',Y'))$ with probability $1$.
\end{corollary}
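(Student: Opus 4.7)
The plan is to prove the cycle (a) $\Rightarrow$ (c) $\Rightarrow$ (d) $\Rightarrow$ (a) together with the side implications (c) $\Rightarrow$ (e) $\Rightarrow$ (a) and (a) $\Leftrightarrow$ (b). All the graph-theoretic content is already contained in Theorem~\ref{thm:Main}(ii); the remaining implications are measure-theoretic bookkeeping. The directions (d) $\Rightarrow$ (a), (e) $\Rightarrow$ (a), and (b) $\Rightarrow$ (a) are immediate from the change-of-variables identity $t(F,U^\varphi)=t(F,U)$ for measure preserving $\varphi$ together with the definition of $t(F,\cdot)$; and (d) $\Rightarrow$ (c) is trivial upon setting $U=W^\psi=(W')^\varphi$.

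For (a) $\Rightarrow$ (c), view $W$ and $W'$ as graphons on $[0,1]$ equipped with Lebesgue measure. By Theorem~\ref{thm:Main}(ii) these two graphons are weakly isomorphic, and by the remark following the definition of weak isomorphism the common target can be chosen to be a strong Lebesguian graphon $G=(\Gamma,\BB,\rho,U_0)$ with weak isomorphisms $\phi$ and $\phi'$ mapping $[0,1]$ into $\Gamma$. Since $\Gamma$ is Lebesgue, it splits mod $0$ into a continuous part and at most countably many atoms $\{a_i\}$. Partition $[0,1]$ into one subinterval carrying the continuous part of $\Gamma$ together with a subinterval of length $\rho(\{a_i\})$ for each atom; this produces a measure preserving map $\theta:[0,1]\to\Gamma$ and a function $U=U_0^\theta\in\WW$. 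Lift $\phi$ and $\phi'$ through $\theta$ by sending each atom fiber $\phi^{-1}(\{a_i\})$ onto the corresponding subinterval via any measure preserving bijection (both are atom-free Lebesgue spaces of the same mass) to obtain maps $\varphi,\psi:[0,1]\to[0,1]$ satisfying $W=U^\varphi$ and $W'=U^\psi$ almost everywhere, proving (c). The implication (a) $\Rightarrow$ (b) is a byproduct: the same weak isomorphism preserves $t(F,\cdot)$ for arbitrary multigraphs $F$, since the integrand factors pointwise and pullback commutes with taking powers of $W$.

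For (c) $\Rightarrow$ (e) and (c) $\Rightarrow$ (d), given $W=U^\varphi$ and $W'=U^\psi$, disintegrate Lebesgue measure along $\varphi$ and along $\psi$ and let $\gamma$ be the coupling on $[0,1]^2$ with Lebesgue marginals concentrated on $\{(x,y):\varphi(x)=\psi(y)\}$ obtained by forming the conditionally independent product over the common value. For two independent samples $(X,X'),(Y,Y')\sim\gamma$ one then has $W(X,Y)=U(\varphi(X),\varphi(Y))=U(\psi(X'),\psi(Y'))=W'(X',Y')$ almost surely, giving (e). To deduce (d), note that $([0,1]^2,\gamma)$ is an atom-free Lebesgue space and hence isomorphic mod $0$ to $[0,1]$ with Lebesgue measure; pulling the two coordinate projections back through such an isomorphism yields measure preserving $\alpha,\beta:[0,1]\to[0,1]$ with $\varphi\circ\alpha=\psi\circ\beta$ almost everywhere, so $W^\alpha=(W')^\beta$ almost everywhere.

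The main obstacle I expect is the atom-resolution step inside (a) $\Rightarrow$ (c): the weak isomorphism $\phi$ is specified only modulo null sets and only with respect to the completion of $\AA$, so producing a bona fide measure preserving map $\varphi:[0,1]\to[0,1]$ requires a careful measurable selection on the atom fibers of $\phi$. Beyond that, the proof is a short deduction from Theorem~\ref{thm:Main}(ii) together with standard disintegration on Lebesgue spaces.
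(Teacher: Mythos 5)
Your proof is correct and follows essentially the same route as the paper: derive (a)$\Leftrightarrow$(b)$\Leftrightarrow$(c) from Theorem~\ref{thm:Main}(ii), resolving atoms of the common target into intervals; obtain (e) by coupling along the fibers of $\varphi$ and $\psi$ (the paper invokes Lemma~\ref{COUPLE-3} here, you use disintegration on Lebesgue spaces, which accomplishes the same thing); and obtain (d) by identifying $([0,1]^2,\gamma)$ with $[0,1]$ and composing with the coordinate projections. One small slip worth flagging: the aside that ``(d)$\Rightarrow$(c) is trivial upon setting $U=W^\psi=(W')^\varphi$'' does not work --- with that choice of $U$ you cannot recover $W$ (or $W'$) as a pullback of $U$, since $\psi$ and $\varphi$ need not be invertible; fortunately that implication lies off the cycles you actually close, so the overall argument is unaffected.
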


\subsection{Examples}

The property of being twin-free is crucial for
Theorem~\ref{thm:Main} (i).

\begin{example}\label{NONTWIN}
Let $\phi_k:~[0,1]\to[0,1]$ be the map $\phi_k(x)=kx\pmod 1$. For any
function $W\in\WW$, the functions $W^{\phi_2}$ and $W^{\phi_3}$
define graphons that are weakly isomorphic but in general not
isomorphic. Indeed, for a ``generic'' $W$ (say $W=xy$), every point
has two twins in $W^{\phi_2}$ and three twins in $W^{\phi_3}$. The
pair of maps in Corollary \ref{cor:UNIQUENESS} (c) go from $W$, while
in (d), they go into
$(W^{\phi_3})^{\phi_2}=(W^{\phi_2})^{\phi_3}=W^{\phi_6}$.
\end{example}

Our next example shows that the Lebesgue property is also needed.

\begin{example}\label{NONLEB}
Let $\Omega$ be a subset of $[0,1]$ with inner Lebesgue measure $0$
and outer Lebesgue measure $1$, and let $\Omega'$ be its complement.
Let $\AA$ and $\AA'$ consist of the traces of Lebesgue measurable
sets on $\Omega$ and $\Omega'$, respectively. Let $W$ and $W'$ be the
restrictions of the function $xy$ to $\Omega\times\Omega$ and
$\Omega'\times\Omega'$, respectively. The identical embeddings
$\varphi:~\Omega\to[0,1]$ and $\varphi':~\Omega'\to[0,1]$ are measure
preserving, and hence $H=(\Omega,\AA,\pi,W)$ and
$H'=(\Omega',\AA',\pi',W')$ are weakly isomorphic. But for every
$x\in\Omega$, we have
\[
2\int\limits\limits_{\Omega} W(x,y)\,d\pi(y) = x\notin \Omega',
\]
which shows that there is no way to ``match up'' the points in
$\Omega$ and $\Omega'$ to get an isomorphism mod $0$. The same
example shows that conclusions (d), (e) in Corollary
\ref{cor:UNIQUENESS} could not be extended to the non-Lebesgue case
either.
\end{example}

\section{Isomorphism}\label{sec:Reduc}

The main goal of this section is to describe how a general graphon
can be transformed into a twin-free Lebesguian graphon. To this end,
we have to recall some basic notions from measure theory (mostly
because their usage does not seem standard), and then discuss
different ``isomorphism-like'' mappings between graphons.

\subsection{Preliminaries}

For a set $\SS$ of subsets of a set $\Omega$, we denote by
$\sigma(\SS)$ the $\sigma$-algebra generated by $\SS$. We call a
$\sigma$-algebra $\AA$ {\it countably generated} if there is
countable set $S\subseteq\AA$ such that $\sigma(\SS)=\AA$. This is
equivalent to the existence of a sequence
$\AA_1\subseteq\AA_2\subseteq\dots$ of finite $\sigma$-algebras whose
union generates $\AA$.

We say that a set $\SS\subseteq\AA$ is a {\it basis} for the
probability space $(\Omega,\AA,\pi)$, if $\sigma(S)$ is dense in
$\AA$, i.e., for every $X\in\AA$ there is a $Y\in\sigma(\SS)$ such
that $\pi(X\triangle Y)=0$.

Given sets $A\subset\Omega$ and two points $x,y\in\Omega$, we say
that $A$ {\it separates} $x$ and $y$ if $|\{x,y\}\cap A|=1$.  We say
that a set $\SS$ of subsets of $\Omega$ {\it separates} $x$ and $y$
if there exists a set $A\in\SS$ that separates $x$ and $y$. This
leads to a partition $\PP[\SS]$ of $\Omega$ by placing two points in
the same class if and only if they are not separated by $\SS$. We say
that $\SS$ is {\it separating} if it separates any two points in
$\Omega$. We'll say that a graphon is separating if its underlying
$\sigma$-algebra is separating.

A probability space $(\Omega',\AA',\pi')$ is called a {\it full
subspace} of $(\Omega,\AA,\pi)$ if $\Omega'$ is a (not necessarily
measurable) subset of $\Omega$ of external measure $1$,
$\AA'=\{A\cap\Omega'\mid A\in\AA\}$, and $\pi'(A\cap\Omega'))=\pi(A)$
for all $A\in\AA$.

Consider two probability spaces $(\Omega,\AA,\pi)$ and
$(\Omega',\AA',\pi')$ and a measure preserving map
$\phi:\Omega\to\Omega'$. The map $\phi$ is called an {\it embedding}
of the first space into the second if $\phi$ is an isomorphism
between $(\Omega,\AA,\pi)$ and a full subspace of
$(\Omega',\AA',\pi')$. We call $\phi$ an {\it embedding} of a graphon
$H=(\Omega,\AA,\pi,W)$ into a graphon $H'=(\Omega',\AA',\pi',W')$ if
$\phi$ is an embedding of $(\Omega,\AA,\pi)$ into
$(\Omega',\AA',\pi')$ and $(W')^{\phi}=W$ almost everywhere.

Let $(\Omega,\AA,\pi)$ be a probability space and $f:~\Omega\to\R$, a
bounded $\AA$-measurable function. Let $\AA_0\subseteq\AA$ be a
sub-$\sigma$-algebra. The {\it conditional expectation}
$\E(f\mid\AA_0)$ is the set of all $\AA_0$-measurable function $f'$
such that $\int_{A_0} f\,d\pi=\int_{A_0}f'\,d\pi$ for all
$A_0\in\AA_0$. It is well known that such functions exist and any two
such functions differ only on a set of $\pi$-measure $0$. We'll write
(somewhat sloppily) $f'=\E(f\mid\AA_0)$ instead of
$f'\in\E(f\mid\AA_0)$. We say that $f$ is {\it almost
$\AA_0$-measurable}, if there is an $\AA_0$-measurable function $f'$
such that $f=f'$ $\pi$-almost everywhere. Clearly we must have
$f'\in\E(f\mid\AA_0)$, and it does not matter which representative of
$\E(f\mid\AA_0)$ we choose, so (again somewhat sloppily) we can say
that $f$ is almost $\AA_0$-measurable if and only if
$f=\E(f\mid\AA_0)$ almost everywhere.

\subsection{Push-Forward and Quotients}
\label{sec:W-phi}

Let $(\Omega,\AA,\pi)$ and $(\Omega',\AA',\pi')$ be probability
spaces and let $\phi:~\Omega\to\Omega'$ be a measure preserving map.
We have described how to ``pull back'' a graphon on
$(\Omega',\AA',\pi')$ to a (weakly isomorphic) graphon on
$(\Omega,\AA,\pi)$. It is also possible to ``push-forward'' a graphon
$H=(\Omega,\AA,\pi,W)$ to a graphon $(\Omega',\AA',\pi',W_\phi)$.
This is defined by the requirement that
\begin{equation}
\label{W-phi-def} \int_{A_1'\times
A_2'}W_\phi(x',y')d\pi'(x')d\pi'(y')
=\int\limits_{\phi^{-1}(A_1')\times\phi^{-1}(A_2')}
W(x,y)\,d\pi(x)\,d\pi(y)
\end{equation}
for all $A_1',A_2'\in\AA'$. The next lemma states that the
``push-forward'' $W_\phi$ is well defined, and that $(W_\phi)^\phi$
is a certain conditional expectation of $W$.

\begin{lemma}
\label{lem:W-Phi-Phi} Let $(\Omega,\AA,\pi)$ and
$(\Omega',\AA',\pi')$ be probability spaces, let
$\phi:\Omega\to\Omega'$ be a measure preserving map, and let $W$ be a
graphon on $(\Omega,\AA,\pi)$.

\smallskip

{\rm (i)} There exists a bounded, symmetric function
$W_\phi:\Omega'\times\Omega'\to\R$ that is $\AA'\times\AA'$
measurable and satisfies \eqref{Rad-Nik-def}.  It is unique up to
changes on a set of measure zero in $\Omega'\times\Omega'$.

\smallskip

{\rm (ii)} Let $\AA_\phi=\phi^{-1}(\AA')$. Then $(W_\phi)^\phi=\E
(W\mid \AA_\phi\times\AA_\phi)$ almost everywhere.

\smallskip

{\rm (iii)} If $\phi$ is an embedding of $(\Omega,\AA,\pi)$ into
$(\Omega',\AA',\pi')$, then $(W_\phi)^\phi=W$ almost everywhere.
\end{lemma}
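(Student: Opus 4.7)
The plan for (i) is to produce $W_\phi$ as the Radon--Nikodym derivative of an appropriate signed measure. Starting from an $\AA\times\AA$-measurable representative of $W$ (which exists by Theorem~\ref{thm:REDUCE}(i)), I would define
\[
\mu(E)=\int_{(\phi\times\phi)^{-1}(E)} W(x,y)\,d\pi(x)\,d\pi(y)\qquad (E\in\AA'\times\AA'),
\]
check that $\mu$ is a finite signed measure (bounded in total variation by $\|W\|_\infty$, $\sigma$-additive by dominated convergence), and observe that $\mu\ll\pi'\times\pi'$ because $\phi$ is measure preserving. Setting $W_\phi=d\mu/d(\pi'\times\pi')$ yields an $\AA'\times\AA'$-measurable function satisfying \eqref{W-phi-def}; boundedness of $W_\phi$ by $\|W\|_\infty$ follows from the a.e.\ characterization of the derivative as a limit of averages over small rectangles, symmetry follows from the symmetry of $\mu$ under the coordinate swap, and uniqueness up to a null set is the standard uniqueness of the Radon--Nikodym derivative.

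For (ii), the pull-back $(W_\phi)^\phi(x,y)=W_\phi(\phi(x),\phi(y))$ is manifestly $\AA_\phi\times\AA_\phi$-measurable, so I only need to verify the defining identity of the conditional expectation on a $\pi$-system generating $\AA_\phi\times\AA_\phi$, namely rectangles $A_1\times A_2$ with $A_i=\phi^{-1}(A_i')$ and $A_i'\in\AA'$. Measure preservation of $\phi\times\phi$ followed by \eqref{W-phi-def} chains together as
\[
\int_{A_1\times A_2}(W_\phi)^\phi\,d\pi(x)\,d\pi(y)
=\int_{A_1'\times A_2'}W_\phi\,d\pi'(x')\,d\pi'(y')
=\int_{A_1\times A_2}W\,d\pi(x)\,d\pi(y),
\]
and the claim then follows from the uniqueness of the conditional expectation.

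For (iii), I would argue that when $\phi$ is an embedding it identifies $\AA$ with $\AA_\phi=\phi^{-1}(\AA')$ modulo $\pi$-null sets: since $\phi$ is an isomorphism between $(\Omega,\AA,\pi)$ and a full subspace of $(\Omega',\AA',\pi')$, every $A\in\AA$ is, up to a null set, of the form $\phi^{-1}(A')$ for some $A'\in\AA'$. Hence $W$ is already $\AA_\phi\times\AA_\phi$-measurable up to a $\pi\times\pi$-null set, so $\E(W\mid\AA_\phi\times\AA_\phi)=W$ almost everywhere, and combining with (ii) gives $(W_\phi)^\phi=W$ almost everywhere.

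The one place where care is required is the opening move in (i): a graphon is only assumed measurable with respect to the \emph{completion} of $\AA\times\AA$, so before invoking Radon--Nikodym I must pass to a genuinely $\AA\times\AA$-measurable representative, as granted by Theorem~\ref{thm:REDUCE}(i) or by modifying $W$ on a $\pi\times\pi$-null set (which does not affect any integral in the argument). Once this cosmetic adjustment is made, the rest of the proof is a routine exercise in applying Radon--Nikodym and the defining property of conditional expectation.
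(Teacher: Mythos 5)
Your proof follows essentially the same route as the paper's: define a measure $\mu$ on $\AA'\times\AA'$ by integrating $W$ over preimages, observe that it is absolutely continuous with respect to $\pi'\times\pi'$, take the Radon--Nikodym derivative, and then verify (ii) on generating rectangles using the defining property of conditional expectation. The paper instead first reduces to $W$ with values in $[0,1]$ (so that $\mu$ is a genuine nonnegative measure with $0\le\mu(A_1'\times A_2')\le(\pi'\times\pi')(A_1'\times A_2')$) and then extends to general bounded $W$ by linearity, whereas you work with a signed measure directly; both are fine.

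Two small remarks. First, your justification of $|W_\phi|\le\|W\|_\infty$ via ``the a.e.\ characterization of the derivative as a limit of averages over small rectangles'' is not the right tool here (that is a Lebesgue-differentiation statement requiring more structure on the underlying space); the clean argument is the one the paper implicitly uses: if $|\mu(E)|\le C(\pi'\times\pi')(E)$ for all $E$, then any Radon--Nikodym derivative satisfies $|d\mu/d(\pi'\times\pi')|\le C$ a.e., by testing on the set where it exceeds $C+\eps$. Second, passing to an $\AA\times\AA$-measurable representative of $W$ before applying Radon--Nikodym is an unnecessary precaution: the sets $(\phi\times\phi)^{-1}(E)$ lie in $\AA\times\AA$, which is contained in the completion, so the defining integrals are already well defined; and the output $W_\phi$ is $\AA'\times\AA'$-measurable automatically from Radon--Nikodym, regardless of which representative of $W$ you used. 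For (iii), you phrase the identification of $\AA$ with $\AA_\phi$ modulo null sets, while the paper shows $\AA_\phi=\AA$ exactly (using injectivity of $\phi$: $\phi^{-1}(\phi(A))=A$); both suffice since conditional expectation is only determined up to null sets anyway.
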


\begin{proof}
(i) By linearity, it is easy to see that we can restrict ourselves to
the case where $W$ takes values in $[0,1]$. Define a measure $\mu$ on
$\AA'\times\AA'$  by
\[
\mu(A_1'\times A_2')
=\int\limits_{\phi^{-1}(A_1')\times\phi^{-1}(A_2')}
W(x,y)\,d\pi(x)\,d\pi(y)
\]
for $A_1',A_2'\in\AA$.  With this definition, we have that
\[
0\leq\mu(A_1'\times A_2')\leq
\pi(\phi^{-1}(A_1'))\pi(\phi^{-1}(A_2'))= (\pi'\times\pi')(A_1'\times
A_2'),
\]
implying in particular that $\mu$ is absolutely continuous with
respect to $\pi'\times\pi'$.  Hence the Radon-Nikodym derivative,
\begin{equation}
\label{Rad-Nik-def} W_\phi=\frac{d\mu}{d(\pi'\times\pi')},
\end{equation}
is well defined. Using the above bound once more, together with the
fact that $\mu(A_1\times A_2)=\mu(A_2\times A_1)$, we furthermore
have that
\begin{equation}
\label{W-Phi-Bds} 0\leq W_\phi(x,y)\leq 1 \qquad\text{and}\qquad
W_\phi(x,y)=W_\phi(y,x)
\end{equation}
almost everywhere. Changing $W_\phi$ on a set of measure zero, we may
assume that these relations hold everywhere. To define $W_\phi$ for a
general bounded function $W$, we  use linearity.

(ii) Let $A_1, A_2\in \AA_\phi$, i.e., let $A_1=\phi^{-1}(A_1')$ and
$A_2=\phi^{-1}(A_2')$ for some $A_1',A_2'\in\AA'$. By the definition
of $W_\phi$, the fact that $\phi$ is measure preserving, and the
definition of $(W_\phi)^\phi$, we have that
\[
\begin{aligned}
\int\limits_{A_1\times A_2} W(x,y)\,d\pi(x)\,d\pi(y) &=
\int\limits_{A_1'\times A_2'} W_\phi(x',y')\,d\pi'(x')\,d\pi'(y')
\\
&= \int\limits_{A_1\times A_2}
W_\phi(\phi(x),\phi(y))\,d\pi(x)\,d\pi(y)
\\
&= \int\limits_{A_1\times A_2} (W_\phi)^\phi(x,y)\,d\pi(x)\,d\pi(y).
\end{aligned}
\]
This implies that $(W_\phi)^\phi=\E (W\mid \AA_\phi\times \AA_\phi)$
almost everywhere.

(iii) Since $\phi$ is an isomorphism between $(\Omega,\AA,\pi)$ and a
subspace of $(\Omega',\AA',\pi')$, we know that given any $A\in\AA$,
we can find an $A'\in\AA'$ such that $\phi(A)=A'\cap\phi(\Omega)$.
But then $\phi^{-1}(A')=\phi^{-1}(\phi(A))=A$, proving that
$A\in\AA_\phi$.  Thus $\AA_\phi=\AA$, which implies that
$(W_\phi)^\phi=W$ almost everywhere.
\end{proof}

We can use the ``push-forward'' construction to define quotients of
graphons. Let $H=(\Omega,\AA,\pi,W)$ be a graphon, let $\PP$ be an
arbitrary partition of $\Omega$ into disjoint sets, and for
$x\in\Omega$, let $[x]$ denote the class in $\PP$ that contains the
point $x$. We then define a graphon
$H/\PP=(\Omega/\PP,\AA/\PP,\pi/\PP,W/\PP)$ and a measure preserving
map $\phi:\Omega\to\Omega/\PP$ as follows: the points in $\Omega/\PP$
are the classes of the partition $\PP$, $\phi$ is the map
$\phi:x\mapsto [x]$, $\AA/\PP$ is the $\sigma$-algebra consisting of
the sets $A'\subset\Omega/\PP$ such that $\phi^{-1}(A')\in\AA$, and
$(\pi/\PP)(A'):=\pi(\phi^{-1}(A'))$. Then $\phi$ is measure
preserving, and the function $W/\PP=W_\phi$ is defined by
\eqref{W-phi-def}.

\subsection{Reductions}

Now we are able to state the theorem that allows us to reduce every
graphon to a twin-free Lebesguian graphon.

\begin{theorem}\label{thm:REDUCE}
{\rm (i)} Let $H=(\Omega,\AA,\pi,W)$ be a graphon.  Then one can
change the value of $W$ on a set of $\pi\times\pi$-measure $0$ to get
a strong graphon.

{\rm (ii)} Let $H=(\Omega,\AA,\pi,W)$ be a graphon.  Then there
exists a countably generated $\sigma$-algebra $\AA_0\subset\AA$ such
that $W$ is $(\AA_0\times\AA_0)$-measurable.

\smallskip

{\rm (iii)} Let $H=(\Omega,\AA,\pi,W)$ be a graphon. Then the graphon
$H/\PP[\AA]$ is separating. If $H$ is countably generated, then so is
$H/\PP[\AA]$.

\smallskip

{\rm (iv)} Let $H=(\Omega,\AA,\pi,W)$ be a separating graphon on a
probability space with a countable basis. Then the completion of $H$
can be embedded into a Lebesguian graphon.

\smallskip

{\rm (v)} Let $H=(\Omega,\AA,\pi,W)$ be a graphon, and let $\PP$ be
the partition into the twin-classes of $H$.  Then $H/\PP$ is almost
twin-free. If $H$ is Lebesguian, then $H/\PP$ is Lebesguian as well.
Furthermore, the projection $H\to H/\PP$ is a weak isomorphism.
\end{theorem}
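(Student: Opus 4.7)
The plan is to verify in turn that the projection $\phi: H \to H/\PP$ is a weak isomorphism, that $H/\PP$ is almost twin-free, and that $H/\PP$ is Lebesguian whenever $H$ is. By Lemma~\ref{lem:W-Phi-Phi}(ii), $(W/\PP)^\phi = \E(W \mid \AA_\phi \times \AA_\phi)$ almost everywhere, where $\AA_\phi = \phi^{-1}(\AA/\PP) \subseteq \AA$ consists of the $\AA$-measurable sets that are unions of twin classes, so for the first assertion it suffices to show that $W$ is $(\AA_\phi \times \AA_\phi)$-measurable up to a null set. I would first apply part~(ii) of the present theorem to pass to a countably generated $\AA_0 \subseteq \AA$ with respect to whose square $W$ is measurable; then $L^2(\Omega, \AA_0, \pi)$ is separable and $W$ is the kernel of a symmetric Hilbert--Schmidt operator on it. The spectral theorem produces a decomposition
\[
W(x,y) = \sum_k \mu_k\, w_k(x)\, w_k(y)
\]
converging in $L^2(\AA_0 \times \AA_0)$, with $\{w_k\}$ an $\AA_0$-measurable orthonormal eigenbasis and $\mu_k \in \R$. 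For $\mu_k \neq 0$ and $x_1 \sim x_2$ with $W(x_i, \cdot)$ both integrable (true off a null set by Fubini), the eigenvalue relation $\mu_k w_k(x_i) = \int W(x_i, y) w_k(y)\,d\pi(y)$ combined with $W(x_1, \cdot) = W(x_2, \cdot)$ a.e.\ forces $w_k(x_1) = w_k(x_2)$. After modifying each $w_k$ on a null set, every $w_k$ may be taken to be constant on every twin class, hence $\AA_\phi$-measurable; the partial sums are then $(\AA_\phi \times \AA_\phi)$-measurable, and so is their $L^2$-limit $W$ up to a null set, as required.

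For the almost twin-free conclusion, fix a $\pi \times \pi$-null set $N \subset \Omega \times \Omega$ off of which $W(x,y) = (W/\PP)(\phi(x), \phi(y))$, and let $N_0$ be the $\pi$-null Fubini slice set of $x$ for which $\pi(\{y : (x,y) \in N\}) > 0$. If $[x_1]$ and $[x_2]$ are twins in $H/\PP$ with representatives $x_1, x_2 \notin N_0$, then $(W/\PP)([x_1], [y]) = (W/\PP)([x_2], [y])$ for $(\pi/\PP)$-a.e.~$[y]$, and since $\phi$ is measure preserving this pulls back to equality for $\pi$-a.e.~$y \in \Omega$; combining with the defining identity then yields $W(x_1, y) = W(x_2, y)$ for $\pi$-a.e.~$y$, so $x_1 \sim x_2$ and $[x_1] = [x_2]$. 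Hence all twin pairs of $H/\PP$ lie inside $\phi(N_0)$, which is $(\pi/\PP)$-null.

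For Lebesguian preservation I would invoke Rokhlin's theorem that the quotient of a Lebesgue space by a measurable partition is again Lebesgue. It remains only to check that $\PP$ is a measurable partition, i.e.\ that modulo a null set its classes coincide with the fibers of a countable family of real-valued measurable functions. The family $g_n(x) := \int W(x,y)\one_{B_n}(y)\,d\pi(y)$, for $\{B_n\}$ a countable generating family of the $\AA_0$ from part~(ii), does the job: each $g_n$ is $\AA_0$-measurable by Fubini, twins manifestly produce equal $g_n$-values, and equality of all the $g_n$'s extends by linearity and monotone class to $W(x_1, \cdot) = W(x_2, \cdot)$ a.e.\ whenever both rows are $\AA_0$-measurable (true for $\pi$-a.e.~$x_i$). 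The step I expect to require the most care is the simultaneous choice of $\AA_\phi$-measurable versions of the $w_k$ in the spectral argument: since the identity $w_k(x_1) = w_k(x_2)$ for twins holds only off a $k$-dependent Fubini null set, one must work with a countable union of such sets and verify that the redefined $w_k$'s still yield an $L^2$-convergent spectral reconstruction of $W$, so that pointwise a.e.\ $W$ remains in the $(\AA_\phi \times \AA_\phi)$-measurable class.
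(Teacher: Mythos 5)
Your proposal addresses only part (v) of the theorem; parts (i)--(iv) are not treated (though you rightly invoke part (ii) in passing). Within part~(v), the pieces on almost twin-freeness and on preservation of the Lebesgue property run essentially parallel to the paper's argument: the paper likewise reduces to a countably generated $\sigma$-algebra and likewise exhibits a countable separating family, using sets of the form $\{x:\mu_x(T)\ge r\}$ for $T$ in a generating $\pi$-system and $r$ rational, where $\mu_x(T)=\int_T W(x,y)\,d\pi(y)$; your family $g_n$ is a direct relabeling of this.

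The genuinely different step is your proof that $W$ is almost $\AA_\phi\times\AA_\phi$-measurable. The paper gives a short, entirely elementary argument with conditional expectations: setting $g_A=\E(\one_A\mid\AA_\PP)$, $U_A(y)=\int_A W(x,y)\,d\pi(x)$, $V_A(x)=\int W(x,y)g_A(y)\,d\pi(y)$, it observes that $U_A$ and $V_A$ are constant on twin classes and uses the tower property repeatedly to show $\int_{A\times B}W=\int_{A\times B}\E(W\mid\AA_\PP\times\AA_\PP)$ for all $A,B\in\AA$. You instead invoke the Hilbert--Schmidt spectral decomposition of $W$ on $L^2(\AA_0)$ and argue eigenfunction by eigenfunction. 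This is a valid alternative route --- the spectral machinery is available here and is in fact deployed elsewhere in the paper (Lemma~\ref{lem:MULTIPLE}) --- and it is arguably more illuminating, since it pinpoints \emph{why} $W$ descends to the twin quotient. It is, however, heavier than needed for a claim the paper dispatches with a few integral manipulations and no spectral theory.

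Your argument has a real soft spot exactly where you flagged it. You assert that ``after modifying each $w_k$ on a null set, every $w_k$ may be taken to be constant on every twin class, hence $\AA_\phi$-measurable,'' but an arbitrary null-set modification of $w_k$ to make it constant on twin classes need not produce a measurable function: a single twin class can straddle the exceptional null set, and patching values across it generally requires a non-measurable selection of representatives. Your closing paragraph locates the danger in the $k$-dependence of the null sets, but a countable union of null sets is still null, so that is not actually the obstruction; the obstruction is the measurability of the patched function. The clean repair is to \emph{define} the replacement explicitly by
\[
\widetilde w_k(x)\;:=\;\frac{1}{\mu_k}\int_\Omega W(x,y)\,w_k(y)\,d\pi(y)\qquad(\mu_k\ne 0).
\]
This is $\AA_0$-measurable outright (it is the integral kernel applied to an $L^2(\AA_0)$ function), equals $w_k$ $\pi$-a.e.\ by the $L^2$ eigenvalue equation, and is genuinely constant on every twin class since twins satisfy $W(x_1,\cdot)=W(x_2,\cdot)$ $\pi$-a.e.\ by definition; hence each preimage $\widetilde w_k^{-1}(B)$ is an $\AA$-measurable union of twin classes, i.e.\ an element of $\AA_\phi$. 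The partial sums $\sum_{k\le n}\mu_k\,\widetilde w_k\otimes\widetilde w_k$ then lie in the closed subspace $L^2(\AA_\phi\times\AA_\phi)$, and their $L^2$-limit gives an $\AA_\phi\times\AA_\phi$-measurable representative of $W$. With this fix your spectral route is correct.
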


\begin{corollary}\label{COR:ON-LEBESGUE}
Every graphon has a weak isomorphism into a strong Lebesguian
graphon.
\end{corollary}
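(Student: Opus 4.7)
The plan is to chain together the reductions in Theorem \ref{thm:REDUCE}(i)--(v), applied to $H = (\Omega,\AA,\pi,W)$ in the order that delivers the properties strong, countably generated, separating, and Lebesguian without destroying the ones already obtained, and then to compose the weak isomorphisms produced at each stage. First, by (i), modify $W$ on a null set so that $H$ is strong. Next, by (ii) choose a countably generated $\AA_0 \subset \AA$ with $W$ being $(\AA_0 \times \AA_0)$-measurable, and set $H_1 := (\Omega,\AA_0,\pi|_{\AA_0},W)$; the identity map is a weak isomorphism $H \to H_1$, since it is measure preserving from $\overline{\AA}$ into $\AA_0$ and $W = W^{\id}$.

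Then apply (iii) to $H_1$ to obtain a separating, countably generated quotient $H_2 := H_1/\PP[\AA_0]$, and let $\phi$ be the quotient map. By the very definition of $\PP[\AA_0]$, every set in $\AA_0$ is a union of equivalence classes, so $\phi^{-1}(\AA_0/\PP[\AA_0]) = \AA_0$; combining this with Lemma \ref{lem:W-Phi-Phi}(ii) yields
\[
(W_\phi)^\phi = \E(W \mid \AA_0 \times \AA_0) = W
\]
almost everywhere, because $W$ is already $(\AA_0 \times \AA_0)$-measurable. Hence $\phi$ is a weak isomorphism $H_1 \to H_2$. Because $H_2$ is countably generated it has a countable basis, so (iv) embeds the completion $\overline{H_2}$ into a Lebesguian graphon $H_3$; the identity map $H_2 \to \overline{H_2}$ and this embedding are both weak isomorphisms. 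Finally, apply (i) once more to $H_3$ to alter $W_3$ on a null set and obtain a strong graphon $H_4$ on the same (still Lebesgue) probability space; $H_4$ is weakly isomorphic to $H_3$ via the identity.

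Composition of weak isomorphisms is again a weak isomorphism, because measure-preserving maps pull back null sets to null sets, so the almost-everywhere equalities $W = U^\phi$ chain through compositions. Composing the maps constructed above therefore produces a weak isomorphism from $H$ into the strong Lebesguian graphon $H_4$, as required. The only delicate verification is the third step: the quotient map is a weak isomorphism only because Step 2 has already arranged for $W$ to be genuinely $(\AA_0 \times \AA_0)$-measurable, so that the conditional expectation in Lemma \ref{lem:W-Phi-Phi}(ii) collapses to $W$ itself. If we tried to quotient by $\PP[\AA]$ directly, we would only recover an $(\AA \times \AA)$-measurable version of $W$, which we would then still have to identify with $W$ itself through a separate measurability argument; so the order of Steps 2 and 3 matters.
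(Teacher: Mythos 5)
Your proof is correct and executes the argument the paper clearly has in mind: chain the five reductions of Theorem~\ref{thm:REDUCE}, verifying at each stage that the map is a weak isomorphism, and then compose. The verification that the quotient map in Step~3 is a weak isomorphism via $\AA_\phi=\AA_0$ and the collapse of the conditional expectation is the right computation, and the (brief) composition argument is sound.

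One remark about your closing paragraph, however. You assert that quotienting by $\PP[\AA]$ directly would fail to give a weak isomorphism and that Step~2 is what makes Step~3 work. That is not quite the issue. After Step~1, $W$ is genuinely $(\AA\times\AA)$-measurable; moreover no set in $\AA$ separates two points of the same $\PP[\AA]$-class by the very definition of $\PP[\AA]$, so $\AA_\phi=\AA$, and hence $(W_\phi)^\phi=\E(W\mid\AA\times\AA)=W$ almost everywhere without any appeal to $\AA_0$. Thus $H\to H/\PP[\AA]$ is already a weak isomorphism. The genuine reason Step~2 must precede Step~3 is that Theorem~\ref{thm:REDUCE}(iv) requires a countable basis; Step~2 supplies countable generation and part (iii) explicitly preserves it, so that the separating graphon you hand to part (iv) is also countably generated. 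Doing (iii) before (ii) would force you to re-run (ii) and (iii) on the quotient, which is more awkward but not impossible. This is a comment on your justification, not a flaw in the proof itself, which stands as written.
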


The proof of this theorem (which is not hard, but technical) will be
given in the rest of this section.

\subsubsection{Making a graphon strong}

Let $H=(\Omega,\AA,\pi,W)$ be a graphon, and let
$W'=\E(W\mid\AA\times\AA)$. Then $W'$ is $\AA\times\AA$-measurable,
and changing $W'$ on a set of measure $0$, we may assume that $W'$ is
symmetric and bounded. Moreover, $\int_{A\times A'} (W'-W) = 0$ for
all $A,A'\in\AA$, which implies that $\int_S (W'-W) = 0$ for all sets
$S$ in the completion of $\AA\times\AA$, so $W=W'$ almost everywhere.
These observations prove part (i) of the Theorem.

\subsubsection{Countable generation}

We prove a simple lemma, which implies Theorem \ref{thm:REDUCE}(ii),
and will also be used at several other places
(Sections~\ref{sec:MARGINAL-MEASURE} and \ref{MULTIPLE}).

\begin{lemma}
\label{lem:Red-to-Count} Let $(\Omega,\AA)$ and $(\Omega',\AA')$ be
measurable spaces, and let $W:\Omega\times\Omega'\to\R$ be a bounded,
$(\AA\times\AA')$-measurable function. Then there exist countably
generated $\sigma$-algebras $\AA_0\subset\AA$ and $\AA_0'\subset\AA'$
such that $W$ is  $(\AA_0\times\AA'_0)$-measurable.
\end{lemma}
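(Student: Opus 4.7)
The plan is to reduce the problem from the function $W$ to the underlying measurable sets, and then to exploit the fact that the product $\sigma$-algebra is generated by measurable rectangles, each of which is trivially captured by a pair of countably generated sub-$\sigma$-algebras.

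First I would set up the key class. Let $\SS$ denote the collection of sets $S\in\AA\times\AA'$ with the property that $S\in\BB_S\times\BB_S'$ for some countably generated $\BB_S\subseteq\AA$ and $\BB_S'\subseteq\AA'$. Every measurable rectangle $A\times A'$ lies in $\SS$, since we may take $\BB_S=\sigma(\{A\})$ and $\BB_S'=\sigma(\{A'\})$, each generated by a single set. The main step is to verify that $\SS$ is itself a $\sigma$-algebra. Closure under complements is immediate because $\BB_S\times\BB_S'$ is a $\sigma$-algebra. For a countable union $\bigcup_n S_n$ with $S_n\in\BB_n\times\BB_n'$, one takes $\BB=\sigma(\bigcup_n\BB_n)$ and $\BB'=\sigma(\bigcup_n\BB_n')$; since the union of countably many countable generating families is countable, $\BB$ and $\BB'$ are countably generated, and each $S_n$ (hence their union) lies in $\BB\times\BB'$. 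Thus $\SS$ is a $\sigma$-algebra containing the rectangles, so $\SS=\AA\times\AA'$.

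Next I would move from sets to the function. Since $W$ is bounded and $(\AA\times\AA')$-measurable, standard dyadic truncation yields a sequence of simple functions $W_n=\sum_{i=1}^{k_n}c_{n,i}\one_{S_{n,i}}$ with $S_{n,i}\in\AA\times\AA'$ that converges uniformly (hence pointwise) to $W$. By the previous paragraph, each $S_{n,i}$ belongs to $\BB_{n,i}\times\BB_{n,i}'$ for some countably generated $\BB_{n,i}\subseteq\AA$ and $\BB_{n,i}'\subseteq\AA'$. Setting
\[
\AA_0=\sigma\Bigl(\bigcup_{n,i}\BB_{n,i}\Bigr),\qquad
\AA_0'=\sigma\Bigl(\bigcup_{n,i}\BB_{n,i}'\Bigr),
\]
we again have countably generated sub-$\sigma$-algebras of $\AA$ and $\AA'$, respectively, because the union of countably many countable generating families is countable. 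Every $W_n$ is $(\AA_0\times\AA_0')$-measurable by construction, and pointwise limits of measurable functions are measurable, so $W$ itself is $(\AA_0\times\AA_0')$-measurable, as required.

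The only step that requires any real thought is the stability of $\SS$ under countable unions; the delicate point is to note that countable generation is preserved by taking countable unions of generating systems, so no uncountable collection of sub-$\sigma$-algebras ever arises. Once that observation is in place, the reduction from arbitrary product-measurable sets to rectangles is routine, and the passage from sets to bounded measurable functions is standard approximation by simple functions.
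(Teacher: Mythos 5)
Your proof is correct, and it takes a genuinely different (though philosophically related) route from the paper. The paper applies the \emph{functional} monotone class theorem directly to the class $\CC$ of bounded product-measurable functions for which the conclusion holds, checking that $\CC$ is a vector space containing indicators of rectangles and closed under bounded monotone limits. You instead work first at the level of \emph{sets}: you show that the collection $\SS$ of sets $S\in\AA\times\AA'$ lying in some $\BB_S\times\BB_S'$ with $\BB_S,\BB_S'$ countably generated is itself a $\sigma$-algebra containing the rectangles, whence $\SS=\AA\times\AA'$; you then lift to functions via simple-function approximation, aggregating the countably many generating families at the end. The two key observations you isolate — that complements are free (same $\BB_S\times\BB_S'$) and that a countable union of countably generated $\sigma$-algebras is countably generated — are exactly what make $\SS$ a $\sigma$-algebra outright, so you avoid the functional monotone class machinery entirely and need only the elementary fact that the product $\sigma$-algebra is generated by rectangles. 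What the paper's route buys is brevity (one invocation of a named theorem); what yours buys is a slightly more elementary and more transparent argument, and a useful intermediate statement about sets in the product $\sigma$-algebra that stands on its own.
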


\begin{proof}
Let $\CC$ be the set of bounded, $(\AA\times\AA')$-measurable
functions $W$ for which the statement of the lemma is true. The set
$\CC$ is clearly a vector space that contains the constant function
$1$ as well as the indicator functions of all rectangles $A\times B$
with $A\in\AA$ and $B\in\AA'$.  If is further not hard to show that
if $(W_k)$ is a sequence of non-negative functions in $\CC$ and
$W_k\uparrow W$ for a bounded function $W$, then the limiting
function $W$ is in $\CC$ as well.  By the monotone class theorem
(see, e.g., Theorem 3.14 in \cite{Wil}), we conclude that $\CC$
contains all bounded  functions which are measurable with respect to
the $\sigma$-algebra generated by the rectangles  $A\times B$, i.e.,
the $\sigma$-algebra $\AA\times\AA'$.
\end{proof}

\subsubsection{Merging inseparable elements}

If we identify elements in the same class of the partition
$\PP[\AA]$, we get a $\sigma$-algebra which is isomorphic under the
obvious map. This implies (iii) of Theorem \ref{thm:REDUCE}.

\subsubsection{Lebesgue property}\label{pf:LEBESGUE}

Consider a separating graphon $H=(\Omega,\AA,\pi,W)$, and assume that
$\AA$ is generated by the countable set $\SS$. Then $\SS$ is a basis
for the completion of $(\Omega,\AA,\pi)$. We invoke the fact (see
e.g. \cite{Roh}, Section 2.2) that any separating complete
probability space with a countable basis can be embedded into a
Lebesgue space. Thus there exists an embedding $\psi$ of the
completion of $(\Omega,\AA,\pi)$ into a Lebesgue space
$(\Omega',\LL',\lambda')$. Let $W'$ be the push-forward  of $W$,
$W'=W_\psi$.  By Lemma~\ref{lem:W-Phi-Phi}, we have that
$(W')^\psi=W$ almost everywhere, which shows that $\psi$ is an
embedding of the completion of $H$ into $(\Omega',\LL',\lambda',W')$.
This proves part (iv) of Theorem \ref{thm:REDUCE}.

\subsubsection{Partitions into Twin-Classes}

We prove (v) in Theorem \ref{thm:REDUCE}. We may assume that $\AA$ is
countably generated. Indeed, by Lemma \ref{lem:Red-to-Count}, we can
replace $\AA$ by a countably generated $\sigma$-algebra $\AA_0$. This
does not change the relation of being twins: Two points
$x,x'\in\Omega$ are twins if and only if the set
$A_{x,x'}=\{y\in\Omega: W(x,y)=W(x,y')\}$ has measure $1$. Since $W$
is measurable with respect to $\AA_0\times\AA_0$, the set $A_{x,x'}$
lies in $\AA_0\subset\AA$, implying that $x$ and $x'$ are twins with
respect to $H$ if and only if they are twins with respect to $H_0$.

Let $\AA_\PP$ consists of those sets in $\AA$ that do not separate
any pair of twin points. Clearly $\AA_\PP$ is a $\sigma$-algebra.

\begin{claim}\label{Cl:WPW}
$W$ is almost $\AA_\PP\times\AA_\PP$-measurable.
\end{claim}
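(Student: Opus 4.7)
The plan is to show that $W$ coincides $\pi\times\pi$-almost everywhere with the function $W^\sharp:=\E(W\mid\AA_\PP\times\AA_\PP)$, which is $\AA_\PP\times\AA_\PP$-measurable by construction. (Equivalently, applying Lemma~\ref{lem:W-Phi-Phi}(ii) to the quotient map $\phi\colon\Omega\to\Omega/\PP$, which satisfies $\AA_\phi=\AA_\PP$, identifies $W^\sharp$ with the pull-back $(W/\PP)^\phi$.) By the monotone class theorem, proving $W=W^\sharp$ almost everywhere reduces to verifying $\int_{A\times B}W\,d(\pi\times\pi)=\int_{A\times B}W^\sharp\,d(\pi\times\pi)$ for arbitrary $A,B\in\AA$.

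Fix such $A,B\in\AA$. By Fubini (using $(\AA\times\AA)$-measurability of $W$ from part (i) and countable generation of $\AA$ from part (ii)), the marginal $g_B(x):=\int_B W(x,y)\,d\pi(y)$ is $\AA$-measurable on a full-measure set, and the defining property of twins forces $g_B(x_1)=g_B(x_2)$ for every twin pair where both values are defined. I will use as an \emph{auxiliary fact} that any $\AA$-measurable function constant on twin-classes outside a $\pi$-null set is $\pi$-a.e.\ equal to an $\AA_\PP$-measurable function. Granting this, $g_B=\E(g_B\mid\AA_\PP)$ a.e., and with $f_A:=\E(\mathbf 1_A\mid\AA_\PP)$ the telescoping
\[
\int_A g_B\,d\pi=\int f_A\,g_B\,d\pi=\iint f_A(x)\,W(x,y)\,\mathbf 1_B(y)\,d(\pi\times\pi)
\]
applies. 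The $y$-variable integrand $G(y):=\int f_A(x)W(x,y)\,d\pi(x)$ is likewise $\AA$-measurable and twin-constant a.e.\ (twins $y_1,y_2$ satisfy $W(\cdot,y_1)=W(\cdot,y_2)$ a.e.), so the auxiliary fact applies again; a second telescoping in $y$, with $f_B:=\E(\mathbf 1_B\mid\AA_\PP)$, yields $\iint f_A(x)\,f_B(y)\,W(x,y)\,d(\pi\times\pi)$. Since $(x,y)\mapsto f_A(x)f_B(y)$ is $\AA_\PP\times\AA_\PP$-measurable, the defining property of $W^\sharp$ lets me substitute $W\mapsto W^\sharp$ in that integral, and running the telescoping backwards on $W^\sharp$ (which is $\AA_\PP\times\AA_\PP$-measurable in both variables) returns $\int_{A\times B}W^\sharp\,d(\pi\times\pi)$.

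The main obstacle is verifying the auxiliary fact. My intended argument passes to level sets: if $f$ is $\AA$-measurable and twin-constant on a full-measure set $\Omega^*$, then for each $t\in\R$ the twin-saturation $L_t^*:=\{x\in\Omega:[x]\cap\{f\le t\}\cap\Omega^*\ne\emptyset\}$ is twin-saturated by construction, and a short case-check shows $L_t^*\triangle\{f\le t\}\subseteq\Omega\setminus\Omega^*$ is $\pi$-null. Hence each $L_t^*$ lies in the completion $\overline\AA$, and being twin-saturated, it yields an $\overline{\AA_\PP}$-measurable representative of $f$; the standard correspondence between a $\sigma$-algebra and its $\pi$-completion then converts this into an honest $\AA_\PP$-measurable function that equals $f$ outside a $\pi$-null set, finishing the proof of the claim.
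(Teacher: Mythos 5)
Your overall route is the paper's: set $W^\sharp=\E(W\mid\AA_\PP\times\AA_\PP)$ and verify $\int_{A\times B}W=\int_{A\times B}W^\sharp$ by telescoping through the conditional expectations $f_A=\E(\mathbf 1_A\mid\AA_\PP)$ and $f_B$, using that the marginals $g_B(x)=\int_B W(x,y)\,d\pi(y)$ and $G(y)=\int f_A(x)W(x,y)\,d\pi(x)$ are $\AA_\PP$-measurable because they are twin-constant. This is exactly the paper's chain with its $U_A$, $V_A$, $g_A$ relabelled, and the reverse telescoping on $W^\sharp$ that you describe is what the paper summarizes in its final equality.

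The trouble is in your auxiliary fact, and as you have stated and argued it there is a genuine gap. After parts (i) and (ii) of the theorem are applied, $W$ is \emph{exactly} $\AA\times\AA$-measurable and bounded, so Fubini gives $g_B$ exactly $\AA$-measurable on all of $\Omega$ (not merely on a full-measure set), and the twin relation forces $g_B(x_1)=g_B(x_2)$ for every twin pair without exception. Hence each level set $\{g_B\le t\}$ is in $\AA$ and twin-saturated, therefore in $\AA_\PP$ by the very definition of $\AA_\PP$; the ``outside a $\pi$-null set'' version is simply not needed. Your attempt to prove that weaker version does not close: from $L_t^*\triangle\{f\le t\}\subseteq\Omega\setminus\Omega^*$ you correctly get $L_t^*\in\overline\AA$, but the step ``being twin-saturated, it yields an $\overline{\AA_\PP}$-measurable representative'' is an unsupported jump. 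A twin-saturated set in $\overline\AA$ need not lie in $\overline{\AA_\PP}$: the exceptional null set need not have a twin-saturated null superset in $\AA$, and twin-saturations of $\AA$-sets are not in general $\AA$-measurable. Drop the null-set hedging, use the exact statement, and your argument coincides with the paper's.
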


Let  $\widetilde W=\E (W\mid \AA_\PP\times\AA_\PP)$. We want to prove
that
\begin{equation}\label{cond-equal}
\int_{A\times B} W(x,y)d\pi(x) d\pi(y) =
\int_{A\times B} \widetilde W(x,y)d\pi(x) d\pi(y)
\end{equation}
for all $A,B\in\AA$. Define the functions
\[
g_A=\E(1_A\mid\AA_\PP),\qquad  U_A(y)=\int_A W(x,y) d\pi(x), \qquad
V_A(x)=\int W(x,y)g_A(y) d\pi(y).
\]
Since $U_A(y)=U_A(z)$ if $y,z$ are twins, the function $U_A$ is
$\AA_\PP$-measurable, similarly for $V_A$, and obviously for $g_A$.
Repeatedly using the fact that $\int fg = \int f\E(g\mid\AA_0)$ if
$f$ is $\AA_0$-measurable, this implies
\[
\begin{aligned}
\int_{A\times B} W(x,y)d\pi(x)d\pi(y) &= \int 1_B(y) U_A(y)d\pi(y)
=\int g_B(y) U_A(y) d\pi(y)
\\&= \int V_B(x) 1_A(x) d\pi(x)=\int V_B(x) g_A(x) d\pi(x)
\\&=\int W(x,y)g_A(x)g_B(y)d\pi(x)d\pi(y)
\\&=\int \widetilde W(x,y)g_A(x)g_B(y)d\pi(x)d\pi(y)
\\&= \int_{A\times B} \tilde W(x,y) d\pi(x) d\pi(y).
\end{aligned}
\]
(where the last equality follows since $\widetilde W$ is
$\AA_\PP\times\AA_\PP$-measurable). This implies \eqref{cond-equal}
and completes the proof of Claim \ref{Cl:WPW}.

Let $\widetilde W=\E (W\mid \AA_\PP\times\AA_\PP)$ as before, then
$H_\PP =(\Omega,\AA_\PP,\pi,\widetilde W)$ is a graphon, which is
clearly weakly isomorphic to $(\Omega,\AA,\pi,W)$.
Let $N$ be the set of points $x\in\Omega$ for which
$\{y\in\Omega:~\widetilde W(x,y)\not= W(x,y)\}$ has positive measure.
Then clearly $N$ is a null set, and two points
$x,x'\in\Omega\setminus N$ are twins in $H$ if and only if they are
twins in $H_\PP$. The graphon $H/\PP$ is obtained from $H_\PP$ by
identifying indistinguishable elements, which implies that $H/\PP$ is
twin-free.

To prove that $H/\PP$ is  Lebesguian if $H$ is Lebesguian, we invoke
the fact (established in Section 3.2 of \cite{Roh}) that
$(\Omega/\PP,\AA/\PP,\pi/\PP)$ is a Lebesgue space provided
$(\Omega,\AA,\pi)$ is a Lebesgue space and there exists a countable
set $\SS\subseteq\AA$ that separates two points if and only they are
in different partition classes.

To construct such a set $\SS$, let $\TT$ be a countable set
generating $\AA$, closed under finite intersections.
For $A\in\AA$ and $x\in\Omega$, let
\[
\mu_x(A)=\int_AW(x,y)d\pi(y).
\]
Since $W$ is a bounded $\AA\times\AA$-measurable function, the
function $A\mapsto \mu_x(A)$ is a finite measure for all $x\in
\Omega$, while the function $x\mapsto \mu_x(A)$ is a $\AA$-measurable
function on $\Omega$ for all $A\in\AA$.

By definition, $x,x'\in\Omega$ are twins iff the set $\{y\in\Omega:
W(x,y)=W(x,y')\}$ has measure zero. This is equivalent to the
condition that $\mu_x(A)=\mu_{x'}(A)$ for all $A\in\AA$.  Since the
measure $\mu_x(\cdot)$ on $\AA$ is uniquely determined by the sets in
$\TT$, we have that $x$ and $x'$ are twins if and only if
$\mu_x(T)=\mu_{x'}(T)$ for all $T\in\TT$.

For every $T\in \TT$ and rational number $r$, consider the sets
$S_{T,r}=\{x\in\Omega:~\mu_x(T)\ge r\}$. There is a countable number
of these. Furthermore, if $x$ and $x'$ are twins, then they belong to
exactly the same sets $S_{T,r}$; if they are not twins, then there is
a $T\in\TT$ such that $\mu_x(T)\not=\mu_{x'}(T)$, and for any
rational number between $\mu_x(T)$ and $\mu_{x'}(T)$, the set
$S_{T,r}$ separates $x$ and $x'$.

This completes the proof of Theorem \ref{thm:REDUCE}.

\subsection{Isomorphism and Weak Isomorphism}

We conclude this section with relating isomorphism and weak
isomorphism.

\begin{lemma}\label{ISO-1}
Let $H_i=(\Omega_i,\AA_i,\pi_i,W_i)$ be graphons with the Lebesgue
property ($i=1,2$), and let $\phi:~\Omega_1\to\Omega_2$ be
measure-preserving. If $H_1$ is almost twin-free, and
$W_1=W_2^{\phi}$ almost everywhere, then $\phi$ is an isomorphism mod
$0$, so in particular $H_1\cong H_2$.
\end{lemma}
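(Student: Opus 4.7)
The plan is to show first that $\phi$ is essentially injective, using the almost twin-free hypothesis, and then to invoke Rokhlin's structure theory of Lebesgue spaces to upgrade injectivity to bi-measurability with measure-preserving inverse.

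First I would apply Fubini to the $(\pi_1\times\pi_1)$-null set on which $W_1 \neq W_2^\phi$ to extract a null set $M\subset\Omega_1$ such that for every $x\in\Omega_1\setminus M$, the identity $W_1(x,y)=W_2(\phi(x),\phi(y))$ holds for $\pi_1$-almost every $y$. Let $N\subset\Omega_1$ be a null set outside of which no two points of $\Omega_1$ are twins, as provided by the almost twin-free hypothesis on $H_1$. Suppose $x_1,x_2\in\Omega_1\setminus(M\cup N)$ satisfy $\phi(x_1)=\phi(x_2)$. Then for almost every $y$,
\[
W_1(x_1,y)=W_2(\phi(x_1),\phi(y))=W_2(\phi(x_2),\phi(y))=W_1(x_2,y),
\]
so $x_1$ and $x_2$ are twins in $H_1$, which forces $x_1=x_2$. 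Hence $\phi$ is injective on the set $\Omega_1\setminus(M\cup N)$ of full measure.

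Next, I would invoke the following standard fact from the theory of Lebesgue spaces (see \cite{Roh}, Section 2.4): a measure-preserving map between Lebesgue spaces that is injective off a null set is automatically an isomorphism mod $0$. Applied here, this supplies null sets $N_1\subset\Omega_1$ (which we may take to contain $M\cup N$) and $N_2\subset\Omega_2$ such that $\phi$ restricts to a bijection $\Omega_1\setminus N_1\to\Omega_2\setminus N_2$ for which both $\phi$ and $\phi^{-1}$ are measurable and measure-preserving with respect to the restricted structures. Combined with the almost everywhere equality $W_1=W_2^\phi$, this is precisely the definition of $H_1\cong H_2$.

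The essential injectivity of $\phi$ is almost mechanical once the hypotheses are unpacked; the real content of the lemma lies in the appeal to Rokhlin's fact that injective measure-preserving maps between Lebesgue spaces have measurable image of full measure and a measurable inverse. The necessity of the Lebesgue assumption is illustrated by Example \ref{NONLEB}, where an injective measure-preserving map fails to admit a measurable inverse. I do not expect other obstacles in the argument.
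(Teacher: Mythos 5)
Your proof is correct and follows essentially the same route as the paper: apply Fubini to extract a full-measure set on which the pointwise identity holds, discard the null set of twin-sharing points, deduce essential injectivity from the twin-free hypothesis, and invoke Rokhlin's theorem that an essentially injective measure-preserving map between Lebesgue spaces is an isomorphism mod $0$. The only cosmetic difference is that the paper cites Section 2.5 of \cite{Roh} rather than Section 2.4 for the Rokhlin fact.
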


\begin{proof}
Let
\[
\Omega_1'=\{x\in \Omega_1\colon~ W_2(\phi(x),\phi(y))= W_1(x,y)
 \text{ for almost all }y\},
\]
and let $N_1=\Omega_1\setminus\Omega_1'$. Then $\pi_1(N_1)=0$ by
Fubini and our assumption that $W_1=W_2^{\phi}$ almost everywhere.

Let $N_1'$ be a nullset such that all twin-classes of $H_1$ have at
most one point in $\Omega_1\setminus N_1'$, and let $\phi'$ to be the
restriction of $\phi$ to $\Omega_1'\setminus N_1'$. Then $\phi'$ is
injective: indeed, if $x_1,x_2\in\Omega_1'\setminus N_1'$ and
$\phi(x_1)=\phi(x_2)$, then
$W_1(x_1,y)=W_2(\phi(x_1),\phi(y))=W_2(\phi(x_2),\phi(y))
=W_1(x_2,y)$ for almost all $y$ by the definition of $\Omega_1'$,
hence $x_1$ and $x_2$ are twins, a contradiction. As shown in
\cite{Roh}, Section 2.5, an injective measure preserving map between
Lebesgue spaces has a measurable inverse defined almost everywhere.
This implies that $\phi':\Omega_1'\setminus N_1'\to\Omega_2$ is an
isomorphism mod $0$, which shows that $\phi$ is an isomorphism mod
$0$ as well.
\end{proof}

\begin{corollary}\label{Cor:WEAK-LEB}
If two twin-free graphons with the Lebesgue property are weakly
isomorphic, then they are isomorphic.
\end{corollary}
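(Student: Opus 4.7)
The plan is to reduce everything to a common strong Lebesguian target and then apply Lemma~\ref{ISO-1} twice.

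Unpacking the hypothesis, weak isomorphism of $H$ and $H'$ provides a third graphon $G=(\Gamma,\BB,\rho,U)$ together with weak isomorphisms $\alpha\colon H\to G$ and $\beta\colon H'\to G$. Since $G$ is not a priori Lebesguian, I would first invoke Corollary~\ref{COR:ON-LEBESGUE} to obtain a weak isomorphism $\gamma\colon G\to G^*$, where $G^*=(\Gamma^*,\BB^*,\rho^*,U^*)$ is a strong Lebesguian graphon.

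Next I would verify that weak isomorphisms compose. The map $\gamma\circ\alpha\colon\Omega\to\Gamma^*$ is measure preserving from the completion of $\AA$ into $\BB^*$: if $B^*\in\BB^*$ then $\gamma^{-1}(B^*)\in\overline{\BB}$, and since $\alpha$ is measure preserving, the preimage of any $\overline{\BB}$-null set under $\alpha$ is an $\overline{\AA}$-null set, so $\alpha^{-1}(\gamma^{-1}(B^*))\in\overline{\AA}$ with the correct measure. For the pull-back identity, the null set in $\Gamma\times\Gamma$ on which $U\neq (U^*)^{\gamma}$ pulls back under $\alpha\times\alpha$ to a null set in $\Omega\times\Omega$; combined with the almost-everywhere equality $W=U^\alpha$ this gives $W=(U^*)^{\gamma\circ\alpha}$ almost everywhere. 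Thus $\gamma\circ\alpha$ and (by the same argument) $\gamma\circ\beta$ are weak isomorphisms from $H$ and $H'$ into the common strong Lebesguian graphon $G^*$.

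Finally, since $H$ and $G^*$ are both Lebesguian and $H$ is twin-free (hence almost twin-free), Lemma~\ref{ISO-1} applied to $\gamma\circ\alpha$ gives that this map is an isomorphism mod $0$, so $H\cong G^*$. The identical argument applied to $\gamma\circ\beta$ gives $H'\cong G^*$. Transitivity of $\cong$ then yields $H\cong H'$, as required. The only genuinely non-trivial step is the functoriality of weak isomorphism under composition; everything else is a direct appeal to Corollary~\ref{COR:ON-LEBESGUE} and Lemma~\ref{ISO-1}.
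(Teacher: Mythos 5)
Your proof is correct and follows the route the paper implicitly intends: the paper remarks after defining weak isomorphism that the common target $G$ may be taken to be a strong Lebesguian graphon, and the corollary then follows by applying Lemma~\ref{ISO-1} to each of the two weak isomorphisms; your argument simply makes that reduction explicit by composing each given weak isomorphism with the one from Corollary~\ref{COR:ON-LEBESGUE} and checking that the composite is again a weak isomorphism (which, since $H$ and $H'$ are Lebesguian and hence complete, lands in the hypotheses of Lemma~\ref{ISO-1} exactly as required). This is the intended argument, spelled out carefully.
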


\section{Canonical Ensembles} \label{sec:Canonical}

We could try to construct a ``canonical form'' of a graphon by
assigning ``tags'' to the points in $\Omega$. For example, we could
tag a point $x$ with its marginal $d(x)=\int W(x,y)\,d\pi(y)$, or by
the sequence of marginals of higher powers of $W$. This, however,
would not work: for example, there could be a transitive group of
measure-preserving permutations of $\Omega$ leaving $W$ invariant,
and then all points would still have the same tag.

To break the symmetry, we select an infinite sequence
$\alpha=(a_1,a_2,\dots)$ of points in $\Omega$, which we call {\it
anchor points.} Now we can tag each point $x\in\Omega$ with the
sequence
\begin{equation}\label{PHI-DEF}
\Phi_\alpha(x)=(W(x,a_1),W(x,a_2),\dots)\in [0,1]^\N
\end{equation}
(where we assume that $0\le W\le 1$) The map $x\mapsto
\Phi_\alpha(x)$ defines a measurable map from $\Omega$ into
$[0,1]^{\N}$ (with respect to the standard Borel $\sigma$-algebra
$\LL$ on $[0,1]^{\N}$), which in turn defines a measure
$\lambda_\alpha$ on the sets $S\in\LL$ by
\begin{equation}\label{LAMBDADEF}
\lambda_{\alpha}(S)=\pi(\Phi_\alpha^{-1}(S)),
\end{equation}
and a graphon $W_{\Phi_\alpha}$ on $([0,1]^{\N},\LL,\lambda_\alpha)$
by \eqref{W-phi-def}. We denote the completion of
$([0,1]^{\N},\LL,\lambda_\alpha,W_{\Phi_\alpha})$ by $H_\alpha$.

We will show that if $\alpha_1,\alpha_2,\dots$ are taken i.i.d. at
random with distribution $\pi$ then with probability one, then
$H_\alpha$ is isomorphic mod $0$ to the original graphon $H$ (see
Section~\ref{sec:anchor} for details). So using an infinite sequence
of independent random points as anchor points, the tags of the points
contain all information about the points.

These tags are almost canonical, except for the choice of the
sequence $\alpha$. So instead of a canonical form, we get a
``canonical ensemble'', a probability distribution $(H_\alpha)$ of
graphons such that $H\cong H_\alpha$ for almost all $\alpha$, and two
graphons are isomorphic if and only if their ensembles can be coupled
so that corresponding graphons are isomorphic.

To prove Theorem \ref{thm:Main} (i), we will therefore have to show
that if $H$ and $H'$ satisfy \eqref{MAINCOND}, then we can ``couple''
the choice of anchor points $\alpha$ in $H$ and $\beta$ in $H'$ so
that $H_\alpha\cong H'_\beta$, thus yielding an isomorphism of $H$
and $H'$. This second step in the proof will be carried out in
Section~\ref{sec:coupling}.

\subsection{Measure theoretic preparation}\label{sec:MARGINAL-MEASURE}

The next technical lemma will be important in the construction of
``canonical ensembles''.

\begin{lemma}\label{L-MEAS-2x}
Let $(\Omega,\AA,\pi)$ and $(\Omega',\AA',\pi')$ be probability
spaces, and let $W:\Omega\times\Omega'\to\R$ be a bounded
$\AA\times\AA'$-measurable function. Let $Y_1,Y_2,\dots$ be
independent random points from $\Omega'$. Let $\AA_0\subseteq \AA$ be
the (random) $\sigma$-algebra generated by the functions
$W(\cdot,Y_k)$. Then with probability $1$, $W$ is almost
$\AA_0\times\AA'$-measurable.
\end{lemma}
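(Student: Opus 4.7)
My plan is to first reduce to the case where $\AA$ and $\AA'$ are countably generated, and then build from the random slice functions $W(\cdot,Y_k)$ a deterministic sub-$\sigma$-algebra whose generators almost surely lie in $\AA_0$ and are rich enough to recover $W$ up to null sets.

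By Lemma~\ref{lem:Red-to-Count} there exist countably generated sub-$\sigma$-algebras of $\AA$ and $\AA'$ with respect to which $W$ is still measurable; since a function measurable with respect to smaller product $\sigma$-algebras is also measurable with respect to the original ones, we may assume without loss of generality that $\AA$ and $\AA'$ themselves are countably generated. Fix a countable algebra $\TT=\{A_1',A_2',\dots\}$ generating $\AA'$ and set
\[
h_j(x)=\int_{A_j'} W(x,y)\,d\pi'(y).
\]
For each fixed $x$ the random variables $W(x,Y_k)\,1_{A_j'}(Y_k)$, $k\ge 1$, are i.i.d.\ and bounded with mean $h_j(x)$, so the strong law of large numbers combined with a Fubini argument on the product of the sample space of $(Y_k)_k$ with $(\Omega,\pi)$ shows that, with probability $1$ and for all $j$ simultaneously, the function
\[
\tilde h_j(x)=\liminf_{n\to\infty}\frac{1}{n}\sum_{k=1}^{n}W(x,Y_k)\,1_{A_j'}(Y_k)
\]
is $\AA_0$-measurable (as a liminf of $\AA_0$-measurable functions) and agrees with $h_j$ $\pi$-almost everywhere.

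On the resulting full-probability event, set $\widetilde\BB=\sigma(\tilde h_j:j\in\N)\subseteq\AA_0$ and $\widetilde W=\E(W\mid\widetilde\BB\times\AA')$, so that $\widetilde W$ is automatically $\AA_0\times\AA'$-measurable. A standard Fubini identity for conditional expectations gives, for each $A_j'\in\TT$,
\[
\int_{A_j'}\widetilde W(x,y)\,d\pi'(y)=\E(h_j\mid\widetilde\BB)(x)=\tilde h_j(x)=h_j(x)\qquad\pi\text{-a.e.,}
\]
using that $h_j=\tilde h_j$ a.e.\ and $\tilde h_j$ is $\widetilde\BB$-measurable. Intersecting over the countable family $\TT$, for $\pi$-a.e.\ $x$ the two bounded signed measures $A'\mapsto\int_{A'}W(x,y)\,d\pi'(y)$ and $A'\mapsto\int_{A'}\widetilde W(x,y)\,d\pi'(y)$ agree on the generating algebra $\TT$, hence on all of $\AA'$; consequently $W(x,\cdot)=\widetilde W(x,\cdot)$ holds $\pi'$-a.e., and a final application of Fubini yields $W=\widetilde W$ $(\pi\times\pi')$-a.e., which is precisely the desired almost $\AA_0\times\AA'$-measurability. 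The main obstacle is that $\AA_0$ depends on the random sample $(Y_k)_k$ and cannot be compared directly with any preexisting deterministic $\sigma$-algebra for which $W$ is measurable; the SLLN+Fubini step is exactly the device that converts the random slice functions into $\AA_0$-measurable representatives of the deterministic marginals $h_j$, after which the argument reduces to standard manipulations with conditional expectations and uniqueness of measures from their values on a generating algebra.
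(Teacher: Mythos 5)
Your proof is correct, and it takes a genuinely different route from the paper's even though both arguments hinge on the same central device (SLLN plus Fubini to convert random empirical averages into $\AA_0$-measurable representatives of deterministic conditional objects). The paper works on the product space: it fixes an increasing sequence of finite sub-$\sigma$-algebras $\AA'_n$ of $\AA'$, forms for each atom $S$ of $\AA'_n$ the empirical average $U_{n,m}(x,y)=\frac{1}{m\pi'(S)}\sum_{j\le m,\,Y_j\in S}W(x,Y_j)$, shows via SLLN that $\int_{A\times A'}U_{n,m}\to\int_{A\times A'}W$ for all test rectangles, extracts a weak-$*$ limit $U_n$ in $L_\infty(\AA_0\times\AA'_n)$ representing both $\E(W\mid\AA\times\AA'_n)$ and $\E(W\mid\AA_0\times\AA'_n)$, and finally lets $n\to\infty$ using Levy's Upward Theorem to identify $W$ with $\E(W\mid\AA_0\times\AA')$. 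You instead apply the SLLN directly to the slice integrals $h_j(x)=\int_{A_j'}W(x,y)\,d\pi'(y)$ over a countable generating algebra $\TT$, obtaining $\AA_0$-measurable versions $\tilde h_j$ of all the $h_j$ simultaneously, and then close the argument by (a) noting that $\widetilde W=\E(W\mid\widetilde\BB\times\AA')$ reproduces the same $\TT$-marginals as $W$, and (b) invoking uniqueness of a bounded signed measure determined by its values on a generating algebra, slice by slice, followed by Fubini. Your route trades the paper's weak-$*$ compactness and martingale convergence for the elementary uniqueness-of-measure argument; in exchange it requires noticing that matching $\TT$-marginals for almost every fixed $x$ already pins down $W(x,\cdot)$, which the paper bypasses by staying in $L_\infty$ of the product. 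Both proofs rely on the same Fubini-measurability check to justify the almost-sure statement over the sampling space, and you articulate that point correctly. The one stylistic remark: you end up proving that $W$ is almost $\widetilde\BB\times\AA'$-measurable for a possibly proper sub-$\sigma$-algebra $\widetilde\BB\subseteq\AA_0$, which is a slightly stronger conclusion than required; this is harmless and in fact mirrors what the paper implicitly achieves with $U_n\in L_\infty(\AA_0\times\AA'_n)$.
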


\begin{proof}
By Lemma~\ref{lem:Red-to-Count}, we may assume that $\AA$ and $\AA'$
are countably generated. Let $\AA'_1\subset\AA'_2\subset\dots$ and
$\AA'_1\subset\AA'_2\subset\dots$ be a sequence of finite
$\sigma$-algebras with $\sigma(\cup_n \AA_n)=\AA$ and $\sigma(\cup_n
\AA'_n)=\AA'$, and let $P_n'$ denote the partition of $\Omega'$ into
the atoms of $\AA'_n$. For $y\in S\in P_n'$ with $\pi'(S)>0$, define
\[
U_{n,m}(x,y)= \frac 1{m\pi(S)}\sum_{\ontop{j\le m}{Y_j\in S}}
W(x,Y_j)
\]
We define $U_{n,m}(x,y)=0$ if $y\in S\in P_n'$ with $\pi'(S)=0$.

First we prove that for every $n\ge 1$, every $A\in\AA$ and
$A'\in\AA'_n$, we have with probability $1$
\begin{equation}\label{EQ:UNW}
\int\limits_{A\times A'} U_{n,m}\,d\pi\,d\pi'\longrightarrow
\int\limits_{A\times A'} W\,d\pi\,d\pi' \qquad (m\to \infty).
\end{equation}
It suffices to prove this in the case when $A'=S\in P_n'$ and
$\pi'(S)>0$. Then for every $y_0\in A'$, we have
\[
\int\limits_A U_{n,m}(x,y_0)\,d\pi(x)= \frac
1{m\pi(S)}\sum_{\ontop{j\le m}{Y_j\in S}} \int\limits_A
W(x,Y_j)\,d\pi(x).
\]
hence by the Law of Large Numbers,
\[
\int\limits_A U_{n,m}(x,y_0)\,d\pi(x)\longrightarrow \frac
1{\pi(S)}\int\limits_{A\times S} W\,d\pi\,d\pi' \qquad (m\to \infty).
\]
Since both sides are independent of $y_0\in S$, integrating over
$y_0\in S$ equation \eqref{EQ:UNW} follows.

The number of choices of $n$, $A\in\cup_k \AA_k$ and $A'\in\AA'_n$ is
countable, and hence it follows that with probability 1,
\eqref{EQ:UNW} holds for all $n\ge 1$, every $A\in\cup_k \AA_k$ and
$A'\in\AA'_n$. Since $\cup_k \AA_k$ is dense in $\AA$, this implies
that \eqref{EQ:UNW} holds for all $n\ge 1$, every $A\in\AA$ and
$A'\in\AA'_n$.

From now on, we suppose that the choice of the $Y_i$ is such that
this holds.

For a fixed $n$, the indices $m$ have a subsequence $m_1<m_2<\dots$
such that $U_{n,m_j}$ converges to some function $U_n$ in the
weak-$*$-topology of $L_\infty(\AA_0 \times \AA'_n)$. Hence by
\eqref{EQ:UNW},
\[
\int\limits_{A\times A'} U_n\,d\pi\,d\pi' =\lim_{j\to\infty}
\int\limits_{A\times A'} U_{n,m_j}\,d\pi\,d\pi' =
\int\limits_{A\times A'} W\,d\pi\,d\pi'
\]
for all $n\ge 1$, every $A\in\AA$ and $A'\in\AA'_n$. Thus $U_n$ is a
representative of $\E(W\mid\AA\times\AA'_n)$. Since $U_n$ is
$\AA_0\times \AA'_n$ measurable, it is also a representative of
$\E(W\mid\AA_0\times\AA'_n)$. This shows that for every $n\ge 1$ we
have
\begin{equation}\label{EQ:WN}
\E(W\mid\AA\times\AA'_n)=\E(W\mid\AA_0\times\AA'_n)
\end{equation}
almost everywhere.

By Levy's Upward Theorem, the left hand side of \eqref{EQ:WN} tends
to $\E(W\mid\AA\times\AA')=W$ almost everywhere. The right hand side
of \eqref{EQ:WN} tends to $\E(W\mid\AA_0\times\AA')$ almost
everywhere, so $W=\E(W\mid\AA_0\times\AA')$ almost everywhere, which
proves the Lemma.
\end{proof}

We formulate a couple of corollaries, the first of which is
immediate:

\begin{corollary}\label{L-MEAS-1}
Let $(\Omega,\AA,\pi)$ and $(\Omega',\AA',\pi')$ be probability
spaces, let $W:~\Omega\times\Omega'\to\R$ be a bounded function that
is measurable with respect to $\AA\times\AA'$, and let
$\AA_0\subset\AA$ be a sub-$\sigma$-algebra. If $W(\cdot,y)$ is
$\AA_0$-measurable for almost all $x\in\Omega$, then  $W$ is almost
$\AA_0\times\AA'$-measurable.
\end{corollary}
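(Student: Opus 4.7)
The plan is to derive this as a direct consequence of Lemma \ref{L-MEAS-2x}. The key observation is that the hypothesis ``$W(\cdot,y)$ is $\AA_0$-measurable for almost all $y\in\Omega'$'' is exactly what is needed to force the random $\sigma$-algebra appearing in Lemma \ref{L-MEAS-2x} to sit inside $\AA_0$.

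First, I would sample $Y_1,Y_2,\dots$ i.i.d.\ from $\pi'$. By assumption, the set $N\subset\Omega'$ of ``bad'' values $y$ for which $W(\cdot,y)$ fails to be $\AA_0$-measurable has $\pi'(N)=0$. Since each $Y_k$ is a $\pi'$-sample, we have $\Pr(Y_k\in N)=0$, so with probability one every $Y_k$ lies outside $N$, and hence every function $W(\cdot,Y_k)$ is $\AA_0$-measurable. Consequently, the random $\sigma$-algebra $\AA_1$ generated by the functions $\{W(\cdot,Y_k):k\ge 1\}$ satisfies $\AA_1\subseteq\AA_0$ almost surely.

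Next, I would invoke Lemma \ref{L-MEAS-2x} (applied to the same $W$), which asserts that with probability one $W$ is almost $\AA_1\times\AA'$-measurable, i.e.\ there is an $\AA_1\times\AA'$-measurable function that equals $W$ almost everywhere on $\Omega\times\Omega'$. Because $\AA_1\times\AA'\subseteq\AA_0\times\AA'$, this same representative is $\AA_0\times\AA'$-measurable, so $W$ is almost $\AA_0\times\AA'$-measurable, as required.

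There is essentially no obstacle here; the corollary really is immediate once Lemma \ref{L-MEAS-2x} is in hand. The only subtlety worth flagging is interpreting the hypothesis correctly (the variable being quantified over is $y\in\Omega'$, not $x\in\Omega$, since the measurability condition is on the section $x\mapsto W(x,y)$), and noting that ``almost $\AA_0\times\AA'$-measurable'' is a property preserved under enlarging the target $\sigma$-algebra, which lets us pass from $\AA_1$ to $\AA_0$ at the end.
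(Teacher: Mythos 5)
Your proof is correct and carries out exactly the deduction the paper has in mind when it calls this corollary immediate from Lemma~\ref{L-MEAS-2x}: sample i.i.d.\ $Y_k$ from $\pi'$, use the almost-all hypothesis to force the random $\sigma$-algebra generated by the $W(\cdot,Y_k)$ to lie inside $\AA_0$ with probability one, intersect this event with the probability-one event furnished by the lemma, and then enlarge from $\AA_1\times\AA'$ to $\AA_0\times\AA'$. You also correctly flag the typo in the statement, which should quantify over $y\in\Omega'$ rather than $x\in\Omega$.
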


\begin{corollary}\label{L-MEAS-2}
Let $(\Omega,\AA,\pi,W)$ be a graphon, and let $X_1,X_2,\dots$ be
independent random points from $\Omega$. Let $\AA_0\subseteq \AA$ be
the (random) $\sigma$-algebra generated by the functions
$W(\cdot,X_k)$. Then with probability $1$, $W$ is almost
$\AA_0\times\AA_0$-measurable.
\end{corollary}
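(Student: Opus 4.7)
The plan is to bootstrap Lemma~\ref{L-MEAS-2x} using the symmetry of $W$. First I would apply Lemma~\ref{L-MEAS-2x} directly to $W$ with $(\Omega',\AA',\pi')=(\Omega,\AA,\pi)$ and $Y_k:=X_k$, which shows that with probability $1$, $W$ is almost $\AA_0\times\AA$-measurable. Then I would invoke symmetry: applying the same lemma after swapping the roles of the two coordinates (equivalently, to the function $(x,y)\mapsto W(y,x)$) yields that, with probability $1$, $W$ is also almost $\AA\times\AA_0$-measurable. Note that the $\sigma$-algebra generated by $W(X_k,\cdot)$ coincides with $\AA_0=\sigma(W(\cdot,X_k):k\ge 1)$ by symmetry of $W$, so the two $\AA_0$'s here are literally the same.

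Given these two one-sided measurability statements, I would combine them into almost $\AA_0\times\AA_0$-measurability via iterated conditional expectation. The key product-measure fact I would use is that, letting $P_i$ denote the operator applying $\E(\cdot\mid\AA_0)$ in the $i$-th coordinate with the other held fixed, one has $\E(W\mid\AA_0\times\AA_0)=P_1P_2W$ almost everywhere on $\Omega\times\Omega$. The first step says $W=\E(W\mid\AA_0\times\AA)$ a.e., which translates to $P_1W=W$ a.e.; the second step similarly gives $P_2W=W$ a.e. Chaining these identities, $\E(W\mid\AA_0\times\AA_0)=P_1P_2W=P_1W=W$ a.e., which is precisely the desired conclusion.

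The main obstacle will be bookkeeping rather than any deep conceptual difficulty: I must keep track of the probability-one-over-$X_k$ qualifiers, and justify the factorization $\E(W\mid\AA_0\times\AA_0)=P_1P_2W$ as a standard product-measure identity (verified by checking $\int_{A\times B}P_1P_2W\,d(\pi\times\pi)=\int_{A\times B}W\,d(\pi\times\pi)$ on rectangles $A,B\in\AA_0$ via Fubini, and noting that $P_1P_2W$ is $\AA_0\times\AA_0$-measurable).
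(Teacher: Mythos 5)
Your argument is correct, but it differs noticeably from the paper's. The paper also starts from Lemma~\ref{L-MEAS-2x}, but it splits the sample into even- and odd-indexed points: first it uses $\AA_1=\sigma(W(\cdot,X_{2k}):k\geq1)$ to get an $(\AA_1\times\AA)$-measurable modification $W'$ of $W$, then it feeds the \emph{odd}-indexed points into the lemma a second time, applied to $W'$, and finally argues that the $\sigma$-algebra generated by the sections $W'(X_{2k+1},\cdot)$ is (up to null sets) contained in $\AA_0$. This sidesteps any commutation-of-conditional-expectation argument, at the cost of the bookkeeping needed to compare the $\sigma$-algebras generated by $W$-sections and $W'$-sections. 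You instead invoke Lemma~\ref{L-MEAS-2x} twice with the \emph{same} anchor points, using the symmetry $W(x,y)=W(y,x)$ to obtain almost-measurability with respect to both $\AA_0\times\AA$ and $\AA\times\AA_0$, and then combine them through the identity $\E(W\mid\AA_0\times\AA_0)=P_1P_2W$. That identity is indeed standard, but the step you label as ``noting that $P_1P_2W$ is $\AA_0\times\AA_0$-measurable'' is the crux and deserves a sentence of its own proof; the natural way is a monotone class argument (show that for $f$ of the form $\one_A\otimes\one_B$ with $A\in\AA$, $B\in\AA_0$ one has $\E(f\mid\AA_0\times\AA)=\E(\one_A\mid\AA_0)\otimes\one_B$, which is $(\AA_0\times\AA_0)$-measurable, and pass to general $(\AA\times\AA_0)$-measurable $f$), in the same spirit as Lemma~\ref{lem:Red-to-Count}. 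Once that is in place, the chaining $W=P_1W=P_1P_2W$ a.e.\ works, and the intersection of the two probability-one events handles the ``with probability~1'' qualifier cleanly. Net comparison: your route exploits the symmetry of $W$ so that a single sample sequence does double duty, giving a conceptually tighter argument, but it imports a product-measure commutation lemma that the paper avoids by its two-stage even/odd construction.
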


\begin{proof}
Let $\AA_1$ denote the $\sigma$-algebras generated by the functions
$W(\cdot,X_{2k})$. Clearly $\AA_1\subseteq\AA_0$. By Lemma
\ref{L-MEAS-2x}, $W$ is almost $\AA_1\times\AA$ measurable with
probability $1$, so we can change it on a set of measure $0$ to get
an $\AA_1\times\AA$ measurable function $W'$. Let $\AA_2'$ be the
$\sigma$-algebras generated by the functions $W'(X_{2k+1},\cdot)$.
Applying the lemma again, we get that $W'$ is almost
$\AA_1\times\AA_2'$ measurable. With probability $1$, each function
$W(X_{2k+1},\cdot)$ differs from $W'(X_{2k+1},\cdot)$ on a set of
measure $0$ only (since the $X_{2k+1}$ are independent of $\AA_1$),
and so $\AA_2'\subseteq \sigma(\AA_0)$. So $W'$ is $\AA_0\times
\sigma(\AA_0)$ measurable, which implies that $W'$, and hence $W$,
are almost $\AA_0\times\AA_0$ measurable.
\end{proof}

\subsection{Anchor Sequences}
\label{sec:anchor}

Let us consider the $\sigma$-algebra $\LL$ on $[0,1]^\N$ generated by
the sets $A_1\times A_2\times\dots$, where each $A_i$ is a Borel
subset of $[0,1]$ and only a finite number of factors $A_i$ are
different from $[0,1]$. Fix a graphon $H=(\Omega,\AA,\pi,W)$ with
$0\leq W\leq 1$. For every $\alpha\in\Omega^\N$, the map
$\Phi_\alpha:~\Omega \to [0,1]^\N$ defined by \eqref{PHI-DEF} is
measurable, and \eqref{LAMBDADEF} defines a probability measure on
$\LL$ with respect to which $\Phi_\alpha$ is measure preserving. Thus
\eqref{Rad-Nik-def} leads to a symmetric, $\LL\times\LL$-measurable
function $W_{\Phi_\alpha}:~[0,1]^\N \times[0,1]^\N \to[0,1]$ which we
denote by $W_\alpha$. We say that $\alpha\in\Omega^\N$ is {\it
regular} if $W=W_\alpha^{\Phi_\alpha}$ almost everywhere.

\begin{lemma}\label{WW}
Almost all $\alpha\in\Omega^\N$ are regular.
\end{lemma}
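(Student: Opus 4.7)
The plan is to combine the push-forward identity from Lemma~\ref{lem:W-Phi-Phi}(ii) with the measurability fact from Corollary~\ref{L-MEAS-2}. The key observation is that the pull-back $\sigma$-algebra $\AA_{\Phi_\alpha}=\Phi_\alpha^{-1}(\LL)$ is precisely the $\sigma$-algebra generated by the coordinate functions $x\mapsto W(x,a_i)$, $i=1,2,\ldots$, since the cylinder sets in $\LL$ generate $\LL$. So if $\alpha=(X_1,X_2,\ldots)$ is drawn i.i.d.\ from $\pi$, then $\AA_{\Phi_\alpha}$ is exactly the random $\sigma$-algebra $\AA_0$ appearing in Corollary~\ref{L-MEAS-2}.

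By Lemma~\ref{lem:W-Phi-Phi}(ii) applied to the measure-preserving map $\Phi_\alpha:\Omega\to[0,1]^\N$, we have for every $\alpha$ that $W_\alpha^{\Phi_\alpha}=\E(W\mid \AA_{\Phi_\alpha}\times\AA_{\Phi_\alpha})$ almost everywhere. On the other hand, Corollary~\ref{L-MEAS-2} tells us that, with probability $1$ in the choice of $\alpha$, the function $W$ is itself almost $\AA_{\Phi_\alpha}\times\AA_{\Phi_\alpha}$-measurable, so its conditional expectation onto this product $\sigma$-algebra equals $W$ almost everywhere. Combining these two identities yields $W=W_\alpha^{\Phi_\alpha}$ almost everywhere for almost every $\alpha$, which is precisely the definition of regularity.

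The only thing that needs a brief verification is the identification of $\Phi_\alpha^{-1}(\LL)$ with the $\sigma$-algebra generated by the $W(\cdot,a_i)$. This is routine: the cylinder sets $\{s\in[0,1]^\N:s_{i_1}\in B_1,\ldots,s_{i_k}\in B_k\}$ (with $B_j$ Borel) generate $\LL$, and their pre-images under $\Phi_\alpha$ are exactly the sets of the form $\bigcap_{j}\{x:W(x,a_{i_j})\in B_j\}$, which generate the $\sigma$-algebra produced by the functions $W(\cdot,a_i)$. No obstacle is really expected here; the content of the lemma is entirely carried by Corollary~\ref{L-MEAS-2}, whose role is to guarantee that the conditional expectation in Lemma~\ref{lem:W-Phi-Phi}(ii) is nontrivial in the right way, namely that it returns $W$ itself.
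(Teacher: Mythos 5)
Your proof is correct and follows exactly the paper's argument: identify $\Phi_\alpha^{-1}(\LL)$ with the $\sigma$-algebra generated by the maps $W(\cdot,a_i)$, invoke Corollary~\ref{L-MEAS-2} to get almost-measurability with probability one, and apply Lemma~\ref{lem:W-Phi-Phi}(ii) to turn this into $W=W_\alpha^{\Phi_\alpha}$ almost everywhere. You spell out the combination step a bit more explicitly than the paper (noting that almost-measurability with respect to the sub-$\sigma$-algebra forces the conditional expectation to return $W$), but the route is the same.
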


\begin{proof}
Let $\AA_\alpha$ denote the $\sigma$-algebra of subsets of $\Omega$
of the form $\Phi_\alpha^{-1}(A)$, where $A\in\LL$. Note that
$\AA_\alpha\subseteq\AA$ by the fact that  $\Phi_\alpha$ is
measurable. Further, almost by definition, $\AA_\alpha$ is the
smallest sub-$\sigma$-algebra of $\AA$ such that all the functions
$W(\cdot,\alpha_i)$ are measurable. As a consequence, we may apply
Lemma \ref{L-MEAS-2} to conclude that for almost all $\alpha$, $W$ is
almost $\AA_\alpha\times\AA_\alpha)$-measurable, which by
Lemma~\ref{lem:W-Phi-Phi} gives that $W=W_\alpha^{\Phi_\alpha}$
almost everywhere.
\end{proof}

Let $\LL_\alpha$ be the completion of $\LL$ with respect to
$\lambda_\alpha$.  Then $([0,1]^\N,\LL_\alpha,\lambda_\alpha)$ is a
complete, Polish space and hence Lebesgue, so $H_\alpha=([0,1]^\N,
\LL_\alpha, \lambda_{\alpha}, W_\alpha)$ defines a Lebesguian
graphon.

\begin{lemma}\label{ANCHOR-ISO}
Let $H$ be a twin free graphon with the Lebesgue property. If
$\alpha$ is regular, then $\Phi_\alpha$ is an isomorphism mod $0$ and
$H_\alpha\cong H$.
\end{lemma}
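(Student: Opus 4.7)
The plan is to reduce this directly to Lemma \ref{ISO-1}, which handles almost exactly this situation: given two Lebesguian graphons with the first one almost twin-free, any measure-preserving $\phi$ that pulls back one edge-function to the other almost everywhere must be an isomorphism mod $0$. So I would set $H_1 = H$, $H_2 = H_\alpha$, and $\phi = \Phi_\alpha$, and just verify the four hypotheses.

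First, I would observe that $H$ is Lebesguian by hypothesis and almost twin-free because it is (strictly) twin-free. Next, I would note that $H_\alpha = ([0,1]^\N, \LL_\alpha, \lambda_\alpha, W_\alpha)$ is Lebesguian: this is the content of the paragraph immediately preceding the lemma, where one uses that $[0,1]^\N$ is Polish and $\LL_\alpha$ is the $\lambda_\alpha$-completion of the Borel $\sigma$-algebra, so the underlying probability space is standard. Third, $\Phi_\alpha$ is measure preserving into $([0,1]^\N, \LL, \lambda_\alpha)$ by the very definition \eqref{LAMBDADEF} of $\lambda_\alpha$ as the push-forward of $\pi$, and measure preservation automatically extends to the completion $\LL_\alpha$. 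Finally, the hypothesis that $\alpha$ is regular is, by definition, exactly the statement that $W = W_\alpha^{\Phi_\alpha}$ almost everywhere.

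With all four hypotheses of Lemma \ref{ISO-1} in hand, the lemma gives that $\Phi_\alpha$ is an isomorphism mod $0$ between $(\Omega, \AA, \pi)$ and $([0,1]^\N, \LL_\alpha, \lambda_\alpha)$, and since it already intertwines $W$ and $W_\alpha$ almost everywhere, the graphons $H$ and $H_\alpha$ are isomorphic mod $0$. There is no real obstacle here; the whole substance of this lemma has already been absorbed into the machinery built up earlier (Lemma \ref{L-MEAS-2} feeding into Lemma \ref{WW} to produce regularity, the standard-space structure of $[0,1]^\N$ producing the Lebesgue property of $H_\alpha$, and Lemma \ref{ISO-1} producing the invertibility of a measure-preserving map between Lebesgue spaces when the source is twin-free).
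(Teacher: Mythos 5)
Your proposal is correct and is essentially the same argument as the paper's own proof: both verify the hypotheses of Lemma \ref{ISO-1} for $\phi=\Phi_\alpha$, using \eqref{LAMBDADEF} for measure preservation, the completeness of $(\Omega,\AA,\pi)$ (part of being Lebesguian) to pass from $\LL$ to $\LL_\alpha$, and the definition of regularity for the condition $W=W_\alpha^{\Phi_\alpha}$ a.e.
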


\begin{proof}
By \eqref{LAMBDADEF}, $\Phi_\alpha$ is a measure preserving map from
$(\Omega,\AA,\pi)$ into $([0,1]^\N, \LL, \lambda_\alpha)$. Since
$(\Omega,\AA,\pi)$ is complete, $\Phi_\alpha$ is measurable (and
measure preserving) from  $(\Omega,\AA,\pi)$ into $([0,1]^\N,
\LL_\alpha, \lambda_\alpha)$ as well.  By the definition of a regular
$\alpha$, $H_\alpha^{\phi_\alpha}=H$ almost everywhere, and by Lemma
\ref{ISO-1}, $\Phi_\alpha$ is an isomorphism mod $0$.
\end{proof}

\section{Coupling}

\subsection{Partially Labeled Graphs and Marginals}\label{GRAPHALG}

We recall some notions from \cite{FLS}. A {\it partially labeled
graph} is a finite graph in which some of the nodes are labeled by
different nonnegative integers. Two partially labeled graphs are {\it
isomorphic}, if there is a label-preserving isomorphism between them.
A {\it $k$-labeled graph} is a partially labeled graph with labels
$1,\dots,k$.

Let $F_1$ and $F_2$ be two partially labeled graphs. Their {\it
product} $F_1F_2$ is defined as follows: we take their disjoint
union, and then identify nodes with the same label (retaining the
labels, and any multiple edges which this might create). For two
unlabeled graphs, $F_1F_2$ is their disjoint union. Clearly this
multiplication is associative and commutative.

Let $H=(\Omega,\AA,\pi,W)$ be a graphon, and let
$\alpha=(a_0,a_1,\dots)$ be an infinite sequence of points in
$\Omega$. Let $F$ be a partially labeled graph with nodes
$V(F)=\{1,\dots,k\}$, where nodes $1,\dots,r$ are labeled by distinct
nonnegative integers $\ell_1,\dots,\ell_r$. Let $X_i=a_{\ell_i}$ for
$1\le i\le r$, and let $X_{r+1},\dots,X_k\in\Omega$ be independent
points from the distribution $\pi$. Define
\[
t_\alpha(F,H)= \E\Bigl(\prod_{ij\in E(F)}W(X_j,X_j)\Bigr).
\]
Of course, this value only depends on those elements of $\alpha$
whose subscripts occur as labels, and we'll sometimes omit the tail
of $\alpha$ if it contains no labels. For example, if $F$ is a
$2$-labeled triangle, then
\begin{align*}
t_{a_1a_2}(F,H)&= t_\alpha(F,W)= \E(W(a_1,a_2)W(a_2,X)W(a_1,X))\\
&=\int\limits\limits_\Omega W(a_1,a_2)W(a_2,x)W(a_1,x)\,d\pi(x).
\end{align*}

It is easy to see that if $F_1$ and $F_2$ are two $k$-labeled graphs,
then
\[
t(F_1F_2,H)=\int\limits_{\Omega^k} t_{x_1\dots x_k}(F_1,W)t_{x_1\dots
x_k}(F_2,W)\,d\pi(x_1)\dots d\pi(x_k).
\]

\subsection{Multiple Edges}\label{MULTIPLE}

\begin{lemma}\label{lem:MULTIPLE}
Let $H=(\Omega,\AA,\pi,W)$ and $H'=(\Omega',\AA',\pi',W')$ be two
graphons, and assume that $t(F,H)=t(F,H')$ for every simple graph
$F$. Then $t(F,H)=t(F,H')$ for every multigraph $F$.
\end{lemma}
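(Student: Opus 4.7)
The plan is to prove the lemma by strong induction on the total edge multiplicity $M=\sum_e m_e$ of $F$, using the labeled-graph product formula from Section~\ref{GRAPHALG} as the main workhorse, together with a polynomial/$L^2$-approximation argument. Recall that the product formula says $t(F_1F_2,H)=\int t_{\vec x}(F_1,H)\,t_{\vec x}(F_2,H)\,d\pi^k(\vec x)$ for any two $k$-labeled graphs, and that the product of simple labeled graphs is a multigraph precisely when labeled edges coincide.

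The base case $M=|E(F_0)|$ (where $F_0$ is the underlying simple graph of $F$) is exactly the hypothesis $t(F,H)=t(F,H')$. For the inductive step, suppose $F$ has some edge $e_0=(u,v)$ with multiplicity $m\ge 2$. I would label $u,v$ as $1,2$ and decompose $F=E\cdot F^-$ in the $2$-labeled graph algebra, where $E$ is the single labeled edge between $1$ and $2$ and $F^-$ has total multiplicity $M-1$. The product formula then gives
\[
t(F,H)=\int_{\Omega^2}W(x,y)\,t_{x,y}(F^-,H)\,d\pi(x)\,d\pi(y),
\]
and similarly for $H'$. The goal is to re-express this integral as a finite (or limiting) linear combination of multigraph densities $t(G,H)$ with total multiplicity strictly less than $M$, so that the inductive hypothesis applies term by term.

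The key technical move is to approximate the bounded measurable kernel $t_{x,y}(F^-,H)$ in $L^2(\pi\times\pi)$ by a sequence of finite linear combinations of kernels $t_{x,y}(G,H)$ for $2$-labeled simple graphs $G$ on enlarged vertex sets. Density of such simple-graph kernels in the relevant subspace of $L^2$ follows from an argument modeled on Lemma~\ref{L-MEAS-2x} and Corollary~\ref{L-MEAS-2}: random anchor points produce a sub-$\sigma$-algebra with respect to which $W$ and all iterated integrals of $W$ are almost measurable, and polynomials in the corresponding single-variable functions $W(\cdot,Y_j)$ are dense in $L^2$ by a Stone--Weierstrass/monotone-class argument. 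Substituting the approximants into the integral above expresses $t(F,H)$ as a limit of densities $t(E\cdot G,H)$, each of which has total edge multiplicity $\le M-1$ (the only multi-edges introduced live on the labeled pair $\{1,2\}$, and their multiplicity is controlled by the approximation scheme).

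The main obstacle is controlling the closure of the labeled-graph algebra under these products: we need to ensure that the $L^2$-approximation of $t_{x,y}(F^-,H)$ by simple-graph kernels does not accidentally force total multiplicity back up to $M$ or higher. This is handled by working only with $2$-labeled simple graphs $G$ whose labeled vertices $\{1,2\}$ are \emph{non-adjacent} in $G$, so that $E\cdot G$ has exactly one edge between labels $1$ and $2$, keeping the total multiplicity at $M-1$. Proving that this restricted class of kernels is still dense enough in $L^2(\pi\times\pi)$ to resolve $t_{x,y}(F^-,H)$ is the heart of the argument and relies on the fact that, away from the label pair, $F^-$ involves only integrations that can be simulated by enlarging the vertex set with fresh unlabeled vertices.
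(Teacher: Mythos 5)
Your approach is genuinely different from the paper's, but it has a fatal gap: the coefficients in your $L^2$-approximation of the kernel $t_{x,y}(F^-,H)$ by simple-graph kernels depend on the graphon $H$, not merely on the combinatorial data of $F^-$. Whether you compute those coefficients as Fourier coefficients against a fixed orthonormal system, via Gram--Schmidt applied to the kernels $t_{x,y}(G,H)$, or through the random-anchor scheme you sketch (in which case they additionally depend on the randomly chosen anchors $Y_j$), there is no reason the \emph{same} linear combination $\sum_i c_i\,t_{x,y}(G_i,\cdot)$ should approximate both $t_{x,y}(F^-,H)$ and $t_{x,y}(F^-,H')$. Consequently, even granting $t(E\cdot G_i,H)=t(E\cdot G_i,H')$ for each simple approximant by hypothesis, you cannot conclude $t(F,H)=t(F,H')$: you would be comparing two different limits with two different coefficient sequences. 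Making the coefficients match for $H$ and $H'$ is precisely the kind of coupling problem that the later sections of the paper are devoted to, and it cannot be assumed here. On top of this, the density claim itself is not established and is far from obvious: the $L^2$-closed span of $2$-labeled simple-graph kernels with non-adjacent labels is a very restricted subspace, and it is unclear that a multigraph kernel $t_{x,y}(F^-,H)$ lies in its closure; invoking polynomials in $W(\cdot,Y_j)$ gives functions of a \emph{single} variable built from anchor evaluations, which is not the same object.

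The paper avoids all of this with a self-contained algebraic trick. Fix a repeated edge $ij$, label $i,j$ by $1,2$, and let $F'$ be $F$ with one copy of that edge removed. For $k\ge 2$, subdividing one copy of the edge by $k-1$ new nodes yields a multigraph $F_k$ with strictly fewer parallel edges, so $t(F_k,H)=t(F_k,H')$ by induction. Now $F_k=F'\cdot P_{k+1}$ in the $2$-labeled algebra, and $t_{x,y}(P_{k+1},H)$ is the kernel of $W^k$ as an integral operator. Using the spectral decomposition $W\sim\sum_n\lambda_n\varphi_n\otimes\varphi_n$ of the compact operator $W$ on $L^2(\Omega)$, one writes $t(F_k,H)=\sum_n a_n\lambda_n^k$ with $a_n=\int\varphi_n(x)\varphi_n(y)\,t_{x,y}(F',H)\,d\pi\,d\pi$, and similarly for $H'$. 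Equality of these series for all $k\ge2$ forces term-by-term cancellation grouped by common eigenvalue, and then the identity also holds at $k=1$, which is exactly $t(F,H)=t(F,H')$. The crucial advantage is that the induction hypothesis is applied to the subdivisions $F_k$ (a fixed combinatorial family, independent of $H$ and $H'$), and the ``interpolation down to $k=1$'' is a spectral argument requiring no $H$-dependent approximation whatsoever. You should adopt this decomposition or find a way to make your coefficients graphon-independent; as written, the inductive step does not go through.
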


\begin{proof}
We use induction on the number of parallel edges in $F$. Suppose that
$F$ has two nodes, say $i$ and $j$, connected by more than one edge.
Let $F_k$ denote the multigraph obtained from $F$ by subdividing one
of these edges by $k-1$ new nodes. Let $F'$ denote the multigraph
obtained by removing one copy of the edge $ij$. So $F_1=F$, but for
$k>1$, $F_k$ has fewer parallel edges than $F$, and so we may assume
that
\[
t(F_k,H)=t(F_k,H')
\]
holds for every $k\ge 2$. We consider all the multigraphs $F_k$ and
$F'$ as $2$-labeled graphs, with $i$ and $j$ labeled $1$ and $2$.

Since $F_k$ can be thought of as the product of $F'$ and a path
$P_{k+1}$ with $k+1$ nodes (the endpoints labeled), we can write
\[
t(F,H)=\int\limits_{\Omega^2} W(x,y)t_{xy}(F',H)\,d\pi(x)\,d\pi(y),
\]
and
\[
t(F_k,H)=\int\limits_{\Omega^2}
t_{xy}(P_{k+1},H)t_{xy}(F',H)\,d\pi(x)\,d\pi(y).
\]
The first factor inside the integral can be expressed as
\[
t_{xy}(P_{k+1},H)=\int\limits_{\Omega^k} W(x,x_1)\cdots
W(x_{k-1}y)\,d\pi(x_1)\dots d\pi(x_{k-1}),
\]
which we can recognize as $k$-th power of the kernel $W$ as an
integral operator.

At this point, it will be useful to assume that $H$ and $H'$ are
countably generated graphons (this can be done without loss of
generality by Lemma~\ref{lem:Red-to-Count}).
As a consequence, $W$ is an integral operator on the separable
Hilbert space $L_2(\Omega,\AA,\pi)$, and since $W$ is bounded, this
implies that $W$ is Hilbert-Schmidt and thus compact, which in turn
implies that $W$ has a spectral representation:
\begin{equation}\label{UEXPAND}
W(x,y) \sim \sum_{n=0}^\infty \lambda_n \varphi_n(x)\varphi_n(y).
\end{equation}
It follows that for every
$k\ge 2$,
\[
t_{xy}(P_{k+1},H) = \sum_{n=0}^\infty
\lambda_n^k\varphi_n(x)\varphi_n(y),
\]
and hence
\[
t(F_k,H)=\sum_{n=0}^\infty \lambda_n^k \int\limits_{\Omega^2}
\varphi_n(x)\varphi_n(y) t_{xy}(F',H)\,d\pi(x)\,d\pi(y).
\]
Similarly, let
\[
W'(x,y) \sim \sum_{i=0}^\infty \mu_n \psi_n(x)\psi_n(y)
\]
be the spectral representation of $W'$, then we get that for every
$k\ge 2$,
\begin{equation}\label{EXPEQ}
0=t(F_k,H)-t(F_k,H')=\sum_{n=0}^\infty a_n \lambda_n^k-b_n \mu_n^k,
\end{equation}
where
\[
a_n=\int\limits_{\Omega^2}  \varphi_n(x)\varphi_n(y)
t_{xy}(F',U)\,d\pi(x)\,d\pi(y)
\]
and
\[
b_n=\int\limits\limits_{(\Omega')^2} \psi_n(x)\psi_n(y)
t_{xy}(F',W)\,d\pi(x)\,d\pi(y)
\]
are independent of $k$. (The integrals exist since $t_{x,y}(F',H)$ is
a bounded function of $x$ and $y$.) It follows that in (\ref{EXPEQ})
everything must cancel, in other words, for every value $c$,
\[
\sum \{a_n:~\lambda_n=c\}=\sum \{b_n:~\mu_n=c\}
\]
(it is known that the sums on both sides have a finite number of
terms, since the multiplicities of the eigenvalues are finite).

Now while (\ref{UEXPAND}) may not be true with equality, the
``trace'' with any other kernel gives an equation; in particular,
\[
t(F,H)=\sum_{n=0}^\infty \lambda_n \int\limits_{\Omega^2}
W(x,y)t_{xy}(F',H)\,d\pi(x)\,d\pi(y) =\sum_{n=0}^\infty a_n
\lambda_n,
\]
and similarly
\[
t(F,H')=\sum_{n=0}^\infty b_n \mu_n,
\]
which shows that $t(F,H)=t(F,H')$ as claimed.
\end{proof}

It will be convenient to assume that $0\le W,W'\le 1$. If this does
not hold, we can apply a linear transformation to the values of the
functions, to get two functions $W_0$ and $W_0'$ with $0\le
W_0,W_0'\le 1$.
Expanding the product in the definition \eqref{T-DEF}, $t(F,W_0)$ can
be written as a linear combination of the values $t(F',W)$, where
$F'$ is a subgraph of $F$. Thus $t(F,W)=t(F,W')$ for every graph $F$
if and only if $t(F,W_0)=t(F,W'_0)$ for every graph $F$ (where
``graph'' could mean either simple graph or multigraph). So
\eqref{MAINCOND} holds for $W_0$ and $W'_0$ if and only if it holds
for $W$ and $W'$. If we prove that this implies
$(\Omega,\AA,\pi,W_0)\cong (\Omega',\AA',\pi',W'_0)$, then $H\cong
H'$ follows trivially.

\subsection{Coupling Anchor Sequences}
\label{sec:coupling}

Consider two graphons $H=(\Omega,\AA,\pi,W)$ and
$H'=(\Omega',\AA',\pi',W')$ satisfying the conditions in Theorem
\ref{thm:Main} (i) and $0\le W,W'\le 1$.
Given two ``anchor'' sequences $\alpha =(a_1,a_2,\dots)$ from
$\Omega$ and $\beta=(b_1,b_2,\dots)$ from $\Omega'$, let
$H_\alpha=([0,1]^\N,\LL_\alpha, \lambda_{\alpha}, W_\alpha)$ and
$H'_\beta=([0,1]^\N, \LL_\beta',\lambda_\beta', W_\beta')$. We would
like to select $\alpha$ and $\beta$ in such a way that
$\lambda_\alpha=\lambda_\beta'$ and $W_\alpha=W_\beta'$ almost
everywhere. This will complete the proof of the theorem. By Lemma
\ref{WW}, we can guarantee that both $\alpha$ and $\beta$ are regular
by selecting $a_1,a_2,\dots$ as well as $b_1,b_2\dots$ independently
and uniformly from $\pi$ and $\pi'$, respectively; however, the
equality of $W_\alpha$ and $W'_\beta$ will only be true if we couple
$\alpha$ and $\beta$ carefully.

The condition on the coupling is described in the following lemma.

\begin{lemma}\label{KEY}
Let $H=(\Omega,\AA,\pi,W)$ and $H'=(\Omega',\AA',\pi',W')$ be two
graphons, and let $\alpha= (a_1,a_2,\dots)$ and
$\beta=(b_1,b_2,\dots)$ be regular sequences for $H$ and $H'$,
respectively. Suppose that for every partially labeled multigraph
$F$,
\[
t_\alpha(F,H)=t_\beta(F,H').
\]
Then  $\lambda_{\alpha}=\lambda_{\beta}'$ and $W_\alpha=W'_\beta$
almost everywhere (with respect to $\lambda_{\alpha}
=\lambda_{\beta}'$).
\end{lemma}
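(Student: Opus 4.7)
The plan is to establish the two conclusions of the lemma in sequence, each by testing against the algebra of polynomials in the ``tag'' coordinates of $[0,1]^{\N}$, which is exactly what is controlled by the hypothesis $t_\alpha(F,H)=t_\beta(F,H')$.

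First I would prove $\lambda_\alpha=\lambda'_\beta$. The $\sigma$-algebra $\LL$ is generated by the finite-dimensional cylinders $C_1\times\cdots\times C_k\times[0,1]^{\N}$, so it suffices to show that for every $k$ the $k$-dimensional marginal of $\lambda_\alpha$ equals that of $\lambda'_\beta$. By the definition \eqref{LAMBDADEF}, the $k$-th marginal of $\lambda_\alpha$ is the distribution on $[0,1]^k$ of the random vector $(W(X,a_1),\dots,W(X,a_k))$ with $X\sim\pi$, and similarly for $\lambda'_\beta$. A probability measure on $[0,1]^k$ is determined by its joint moments via the Weierstrass approximation theorem, and the $(m_1,\dots,m_k)$-moment under $\lambda_\alpha$ equals
\[
\E\Bigl[\prod_{i=1}^k W(X,a_i)^{m_i}\Bigr]=t_\alpha(F,H),
\]
where $F$ is the partially labeled multigraph with labels $1,\dots,k$, one extra unlabeled node $u$, and $m_i$ parallel edges from $u$ to the vertex labeled $i$. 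The analogous identity for $\lambda'_\beta$ yields $t_\beta(F,H')$ on the right, and these are equal by hypothesis. Hence all finite-dimensional marginals coincide, and $\lambda_\alpha=\lambda'_\beta=:\lambda$.

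Second, to show $W_\alpha=W'_\beta$ almost everywhere, I would compare the two finite (signed) measures $W_\alpha\,d\lambda\,d\lambda$ and $W'_\beta\,d\lambda\,d\lambda$ on $\LL\times\LL$ on the product cylinders $C\times D$ that generate $\LL\times\LL$. Because $\alpha$ is regular, $W=W_\alpha^{\Phi_\alpha}$ almost everywhere, and change of variables under $\Phi_\alpha$ gives
\[
\int_{C\times D}W_\alpha\,d\lambda\,d\lambda
=\int_{\Phi_\alpha^{-1}(C)\times\Phi_\alpha^{-1}(D)}W(x,y)\,d\pi(x)\,d\pi(y),
\]
with an entirely parallel formula for $W'_\beta$ using $\Phi'_\beta$ and $W'$. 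If $C$ and $D$ involve the first $k$ coordinates, the indicator $1_{C\times D}\circ(\Phi_\alpha\times\Phi_\alpha)$ is a bounded Borel function of $(W(x,a_i),W(y,a_i))_{i\le k}$, hence on the compact cube $[0,1]^{2k}$ can be approximated in $L^1(\lambda\times\lambda)$ by polynomials in these $2k$ coordinates by Stone--Weierstrass combined with bounded convergence. After this approximation, each monomial integral takes the form
\[
\int\prod_i W(x,a_i)^{m_i}\prod_i W(y,a_i)^{m_i'}\,W(x,y)\,d\pi(x)\,d\pi(y)=t_\alpha(G,H),
\]
where $G$ is the partially labeled multigraph with labels $1,\dots,k$, two unlabeled vertices $u,v$ connected by an edge, and $m_i$ (resp.\ $m_i'$) parallel edges from $u$ (resp.\ $v$) to the vertex labeled $i$. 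By hypothesis $t_\alpha(G,H)=t_\beta(G,H')$, so the polynomial approximations yield equal integrals, and passing to the limit identifies the cylinder integrals of $W_\alpha$ and $W'_\beta$. Since product cylinders generate $\LL\times\LL$, we conclude $W_\alpha=W'_\beta$ almost everywhere with respect to $\lambda\times\lambda$.

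The main obstacle is the approximation step in the second part: the hypothesis directly controls only ``polynomial'' test functions in the anchor tags, while we must test against indicators of arbitrary cylinder sets. Stone--Weierstrass on the compact space $[0,1]^{2k}$ bridges the gap, but care is needed to keep the approximation uniform enough against the weighting by the bounded kernel $W(x,y)$ so that bounded (or dominated) convergence applies and the equality of $t_\alpha$--values passes to the limit.
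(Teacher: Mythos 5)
Your proof is correct. The first half coincides with the paper's: both reduce $\lambda_\alpha=\lambda'_\beta$ to equality of mixed moments of the tag vector, realized as $t_\alpha$-values of labeled multi-stars. In the second half you take a genuinely different route. The paper shows that the triples $Z_1=(X,Y,W_\alpha(X,Y))$ and $Z_2=(X,Y,W'_\beta(X,Y))$ have the same law by matching \emph{all} their mixed moments, which requires the hypothesis for multigraphs with an arbitrary number $m$ of parallel edges between the two unlabeled vertices, and then deduces $W_\alpha=W'_\beta$ a.e.\ from the distributional identity. You instead compare the signed measures $W_\alpha\,d(\lambda\times\lambda)$ and $W'_\beta\,d(\lambda\times\lambda)$ on the $\pi$-system of product cylinders, reducing each cylinder integral, via the push-forward identity \eqref{W-phi-def}, to an integral of $W(x,y)$ against a bounded function of finitely many tags, and then approximating that function by polynomials in $L^1$ (using $\lambda_\alpha=\lambda'_\beta$ from step one so the same polynomial works on both sides). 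Your route only invokes the hypothesis for multigraphs in which the two unlabeled vertices are joined by a \emph{single} edge, at the cost of the explicit Stone--Weierstrass/dominated convergence argument and of using the first conclusion inside the proof of the second; the paper's route is more uniform, invoking the same ``moments determine distribution'' lemma (Lemma~\ref{MOMENTS}) twice. Two small remarks: the change-of-variables identity you write is exactly the defining relation of $W_\alpha$ as a push-forward and does not actually need regularity of $\alpha$ (your invocation of regularity there is harmless but unnecessary), and to pass from agreement on product cylinders to $W_\alpha=W'_\beta$ a.e.\ you should note explicitly (as you implicitly do) that the two bounded signed measures agree on a generating $\pi$-system of the finite measure $\lambda\times\lambda$, hence everywhere.
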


\begin{proof}
First, we show that $\lambda_\alpha=\lambda_\beta'$. These
probability measures are defined on the $\sigma$-algebra $\LL$ as the
distribution measures of the random variables
$W(X,a_1),W(X,a_2),\dots)$ and $W'(Y,b_1),W'(Y,b_2),\dots)$, where
$X$ and $Y$ are random points from $\pi$ and $\pi'$, respectively. By
Lemma~\ref{MOMENTS} it therefore suffices to prove that these random
variables have the same mixed moments.

Let $(k_1,k_2,\dots)$ be a sequence of nonnegative integers, of which
only a finite number is nonzero; say $k_i=0$ for $i>m$. Then
\[
\E(\prod_i W(X, a_i)^{k_i}) = t_{\alpha}(F,H),
\]
where $F$ is the star on $m+1$ nodes, with the endnodes labeled
$1,\dots,m$, and the edge between the center and endnode $i$ replaced
by $k_i$ parallel edges. Similarly,
\[
\E(\prod_i W'(Y, b_i)^{k_i}) = t_{\beta}(F,H').
\]
These numbers are equal by the hypothesis of the Lemma. This proves
that $\lambda_\alpha=\lambda_\beta'$.

Second, we show that $W_\alpha(x,y)=W'_\beta(x,y)$ for almost all
$x,y\in[0,1]^\N$. It suffices to show that the random variables
$Z_1=(X,Y,W_\alpha(X,Y))$ and $Z_2=(X,Y,W'_\beta(X,Y))$  (with values
from $[0,1]^\N\times [0,1]^\N\times[0,1]$) have the same
distribution, where $X$ and $Y$ are independent points in
$(\Omega^\N, \lambda_\alpha)$.

We can generate $Z_1$ by choosing independent uniform random points
$X'$ and $Y'$ from $\Omega$, and letting $X=\Phi_\alpha(X')$ and
$Y=\Phi_\beta(Y')$. Since $\alpha$ is regular, we have that
\[
W_\alpha(X,Y)=W(X',Y')
\]
with probability one, and hence
\[
Z_1=(W(X',a_1),W(X',a_2),\dots,W(Y',a_1),W(Y',a_2),\dots,W(X',Y')).
\]
Similarly, we have
\[
Z_2=(W'(X'',b_1),W'(X'',b_2),\dots,W'(Y'',b_1),W'(Y'',b_2),\dots,W'(X'',Y'')),
\]
where $X''$ and $Y''$ are independent random points from $\pi'$. To
prove that $Z_1$ and $Z_2$ have the same distribution, it again
suffices to prove that they have the same mixed moments.

A particular mixed moment is given by nonnegative integers
$(k_1,k_2,\dots)$, $(l_1,l_2,\dots)$ and $m$ (of which only a finite
number is nonzero; say $k_i=l_i=0$ for $i>n$). Let us define the
multigraph $F$ as follows. $F$ has two unlabeled nodes $v_x$ and
$v_y$, and $n$ further nodes labeled $1,\dots,n$. We connect $v_x$ to
$i$ by $k_i$ edges, $v_y$ to $i$ by $l_i$ edges ($i=1,\dots,n$), and
$v_x$ to $v_y$ by $m$ edges. Then
\[
\E\bigl(W(X',a_1)^{k_1}\cdots W(X',a_n)^{k_n}W(Y',a_1)^{k_1}\cdots
W(Y',a_n)^{k_n}W(X',Y')\bigr)=t_\alpha(F,H).
\]
and similarly
\[
\E\bigl(W'(X'',b_1)^{k_1}\cdots
W'(X'',b_n)^{k_n}W'(Y'',b_1)^{k_1}\cdots
W'(Y'',b_n)^{k_n}W'(X,Y)\bigr)=t_\beta(F,H').
\]
These two numbers are the same by hypothesis. This completes the
proof of the Lemma.
\end{proof}

To prove Theorem \ref{thm:Main}, we next show:

\begin{lemma}\label{LEM:COUPLE}
Let $H=(\Omega,\AA,\pi,W)$ and $H'=(\Omega',\AA',\pi',W')$ be two
Lebesguian graphons such that
\[
t(F,H)=t(F,H').
\]
for every multigraph $F$. Then we can couple sequences
$\alpha\in\Omega^\N$ with sequences $\beta\in{\Omega'}^\N$ so that if
$\alpha,\beta)$ is a sequence from this joint distribution, then
\[
t_\alpha(F,H)=t_\beta(F,H').
\]
holds almost surely for every partially labeled multigraph $F$.
\end{lemma}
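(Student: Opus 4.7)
The plan is to exhibit a common distribution for the ``labeled-density vector'' on both sides and then couple via disintegration. Enumerate the countable set of partially labeled multigraphs as $F_1,F_2,\dots$, assume $0\le W,W'\le 1$ as noted just before the lemma, and consider the $[0,1]^\N$-valued random variables
\[
\Psi(\alpha)=\bigl(t_\alpha(F_j,H)\bigr)_{j\ge 1}\quad\text{and}\quad \Psi'(\beta)=\bigl(t_\beta(F_j,H')\bigr)_{j\ge 1}
\]
defined on $(\Omega^\N,\pi^\N)$ and $((\Omega')^\N,(\pi')^\N)$ respectively. The heart of the proof is to show that these two push-forward measures on $[0,1]^\N$ coincide; a standard disintegration then produces the desired coupling.

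For the matching of finite-dimensional distributions, fix $k$-labeled multigraphs $F_1,\dots,F_m$ (with labels in $\{1,\dots,k\}$) and exponents $p_1,\dots,p_m\in\N$. The labeled product $F_1F_2$ from Section \ref{GRAPHALG} identifies the $k$ labeled nodes of the two factors but keeps their unlabeled vertices disjoint, so after fixing $a_1,\dots,a_k$ the integral over the unlabeled vertices factors, giving the pointwise identity
\[
t_{a_1\dots a_k}(F_1F_2,H)=t_{a_1\dots a_k}(F_1,H)\,t_{a_1\dots a_k}(F_2,H).
\]
Iterating yields $\prod_i t_\alpha(F_i,H)^{p_i}=t_\alpha(G,H)$ with $G=F_1^{p_1}\cdots F_m^{p_m}$ the labeled graph product. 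Taking expectation over $a_1,\dots,a_k\sim\pi$ and stripping the labels from $G$ to obtain an unlabeled multigraph $\widehat G$, the identity from the end of Section \ref{GRAPHALG} gives
\[
\E\Bigl[\prod_{i=1}^m t_\alpha(F_i,H)^{p_i}\Bigr]=t(\widehat G,H),
\]
and the analogous identity for $H'$ and $\beta$. By the hypothesis, these agree. Since all coordinates lie in $[0,1]$, polynomials are dense in $C([0,1]^m)$ and equal moments force equal joint laws; hence every finite-dimensional marginal of $\Psi(\alpha)$ matches that of $\Psi'(\beta)$, and Kolmogorov consistency upgrades this to the equality $\Psi_\ast\pi^\N=\Psi'_\ast(\pi')^\N$ on $[0,1]^\N$. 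Call this common law $\mu$.

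Because $\Omega$ and $\Omega'$ are Lebesgue spaces, so are the countable products $\Omega^\N$ and $(\Omega')^\N$; in particular both are standard Borel, so the measurable maps $\Psi$ and $\Psi'$ admit disintegrations. Concretely, there are probability kernels $(K_z)_{z\in[0,1]^\N}$ on $\Omega^\N$ and $(K'_z)_{z\in[0,1]^\N}$ on $(\Omega')^\N$ with $K_z$ concentrated on $\Psi^{-1}(z)$ and $K'_z$ on $(\Psi')^{-1}(z)$ for $\mu$-almost every $z$, and with $\pi^\N=\int K_z\,d\mu(z)$, $(\pi')^\N=\int K'_z\,d\mu(z)$. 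Define the coupling on $\Omega^\N\times(\Omega')^\N$ by sampling $z\sim\mu$ and then $\alpha\sim K_z$, $\beta\sim K'_z$ conditionally independently. Its marginals are the required $\pi^\N$ and $(\pi')^\N$, and by construction $\Psi(\alpha)=z=\Psi'(\beta)$ almost surely; since the set of labeled multigraphs is countable, this says exactly that $t_\alpha(F,H)=t_\beta(F,H')$ holds for every partially labeled multigraph $F$ almost surely.

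The only nontrivial step is the product identity $t_\alpha(F_1F_2,H)=t_\alpha(F_1,H)\,t_\alpha(F_2,H)$, but this is immediate from the definition of the labeled graph product; once it is in hand, the moment calculation repackages the hypothesis $t(G,H)=t(G,H')$, and both the multidimensional moment-determinacy on $[0,1]^m$ and the existence of disintegrations on standard Borel spaces are standard. I expect no further obstacle.
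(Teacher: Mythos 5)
Your proof is correct, but it takes a genuinely different route from the paper. The paper builds the coupling of anchor sequences \emph{recursively}, one coordinate at a time: having coupled $(a_1,\dots,a_k)$ with $(b_1,\dots,b_k)$ so that all $k$-labeled densities agree, it considers the random $(k+1)$-labeled density vectors $A$ and $B$ obtained by appending a fresh random point $X$ (resp.\ $Y$), matches their conditional distributions by the same product-of-labeled-graphs moment computation you use, and invokes Lemma~\ref{COUPLE-3} to extend the coupling by one coordinate; the infinite coupling is then assembled from this tower of finite couplings. You instead couple \emph{all at once}: you push the whole product measures $\pi^\N$ and $(\pi')^\N$ forward under the full density maps $\Psi,\Psi'$ into $[0,1]^\N$, match the resulting laws via the same product identity and moment determinacy (which is exactly Lemma~\ref{MOMENTS}), and then disintegrate. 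In fact your final step is precisely an application of Lemma~\ref{COUPLE-3} with $\Gamma=[0,1]^\N$, $f=\Psi$, $g=\Psi'$, $\rho=\mu$ (note that $\Omega^\N$ and $(\Omega')^\N$ are again Lebesgue, and $[0,1]^\N$ with its Borel $\sigma$-algebra is countably generated and separating), so you could simply cite that lemma rather than re-deriving the disintegration. Your route is cleaner in that it avoids both the induction and the step at the end where the tower of finite couplings must be assembled into a coupling of $\Omega^\N$ with $(\Omega')^\N$; it also sidesteps the (glossed-over) measurability issue of choosing the conditional couplings of $X$ and $Y$ measurably in $(a_1,\dots,a_k,b_1,\dots,b_k)$. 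What it costs is nothing beyond the observation that the labeled product identity $t_\alpha(F_1F_2,H)=t_\alpha(F_1,H)\,t_\alpha(F_2,H)$ holds pointwise, which is immediate and is also implicitly used by the paper. The only small thing you should add for completeness is a remark that $\alpha\mapsto t_\alpha(F,H)$ is measurable (Fubini on the product space), so that $\Psi$ and $\Psi'$ are indeed measurable maps.
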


\begin{proof}
Let $\FF_k$ be the set of $k$-labeled multigraphs. We define
recursively a coupling of sequences $\alpha\in\Omega^k$ with
sequences $\beta\in{\Omega'}^k$ so that
$t_{\alpha'}(F,H)=t_{\beta'}(F,H')$ holds almost surely for every
$F\in\FF_k$. Let $(a_1,\dots,a_k)$ and $(b_1,\dots,b_k)$ be chosen
from this coupled distribution. Consider two random points $X$ from
$\pi$ and $Y$ from $\pi'$, and the random variables
\[
A=(t_{a_1\dots a_kX}(F,H):~F\in\FF_{k+1})
\]
and
\[
B= (t_{b_1\dots b_kY}(F,H):~F\in\FF_{k+1})
\]
with values in $[0,1]^{\FF_{k+1}}$. We claim that the variables $A$
and $B$ have the same distribution. It suffices to show that $A$ and
$B$ have the same mixed moments. Consider any moment of $A$; in other
words, let $F_1,\dots,F_m\in\FF_{k+1}$,  let $q_1,\dots,q_m$ be
nonnegative integers, and let $F_i^{q_i}$ be obtained from $F_i$ by
replacing each edge in $F_i$ by $q_i$ edges. Then the corresponding
moment of $A$ is
\[
\E\Bigl(\prod_{i=1}^m t_{a_1\dots a_kX}(F_i,H)^{q_i}\Bigr)
=\E\bigl(t_{a_1\dots a_k X}(F_1^{q_1}\dots F_m^{q_m},H)\bigr)
=t_{a_1\dots a_k}(F,H),
\]
where the multigraph $F$ is obtained by unlabeling the node labeled
$k+1$ in the multigraph $F_1^{q_1}\dots F_m^{q_m}$. Expressing the
moments of $B$ in a similar way, we see that they are equal by the
induction hypothesis. This proves that $A$ and $B$ have the same
distribution.

Using Lemma~\ref{COUPLE-3}
it follows that we can couple the variables $X$ and $Y$ so that $A=B$
with probability 1. In other words, we can replace $X$ and $Y$ by a
random variable $(X',Y')\in\Omega\times\Omega'$ so that $X'$ has
distribution $\pi$, $Y'$ has distribution $\pi'$, and their joint
distribution satisfies
\[
t_{a_1\dots a_kX'}(F,H)= t_{b_1\dots b_kY'}(F,H')
\]
for every $F\in\FF_{k+1}$ with probability 1. Thus we have extended
the coupling to $\Omega^k\times{\Omega'}^k$.

It is clear that this sequence of couplings defines a coupling of
$\Omega^\N$ with ${\Omega'}^\N$ as claimed.
\end{proof}

\subsection{Conclusion of proofs}

\noindent{\bf Proof of Theorem \ref{thm:Main}.} Part (i) follows
easily: if we choose random sequences $(\alpha,\beta)$ from the
coupled distribution given by Lemma \ref{LEM:COUPLE}, then these
sequences will be regular with probability $1$, and so they satisfy
the conditions of Lemma \ref{KEY}.

To prove (ii), suppose that $H=(\Omega,\AA,\pi,W)$ and
$H'=(\Omega',\AA',\pi',W')$ satisfy \eqref{MAINCOND} for every simple
graph $F$. By Corollary \ref{COR:ON-LEBESGUE}, we can find twin-free
Lebesguian graphons $G=(\Gamma,\BB,\rho,U)$ and
$G'=(\Gamma',\BB',\rho',U')$ and weak isomorphisms $\phi$ and $\phi'$
from $H$ and $H'$ to $G$ and $G'$, respectively. It follows by
Theorem \ref{thm:Main}(i) that the ${G}$ and ${G'}$ are isomorphic
mod $0$, so in particular $U=(U')^{\psi'}$ almost everywhere for some
measure preserving map $\psi':\Gamma\to\Gamma'$. Defining
$\psi:\Omega\to\Gamma'$ by $\psi(x)=\psi'(\phi(x))$, we conclude that
$W=(U')^\psi$ almost everywhere. The maps $\psi$ and $\phi'$ are
measure preserving from the completions $\overline{H}$ and
$\overline{H'}$ into $G'$.\hfill$\square$\medskip

\noindent{\bf Proof of Corollary \ref{cor:UNIQUENESS}.} The
equivalence of (a), (b) and (c) follows by Theorem~\ref{thm:Main}
(ii) and the fact that a function which is measurable with respect to
the completion of $\LL\times\LL$ is almost everywhere equal to a
function which is measurable with respect to $\LL\times\LL$. In the
proof of (c), Theorem~\ref{thm:Main} may give a graphon containing
atoms, but it is easy to replace these atoms by intervals of
appropriate length.

To prove that (c)$\Longrightarrow$(e), assume that $\varphi,\psi$ and
$U$ exist as in (c). Let $X,X'\in[0,1]$ be independent random points
from the uniform distribution $\lambda$ on $[0,1]$. Since $\varphi$
and $\psi$ are measure preserving, $\varphi(X)$ and $\psi(Y)$ have
the same distribution, and hence by Lemma \ref{COUPLE-3}
there is a coupling measure $\gamma$ on $[0,1]\times[0,1]$ with
marginals $\lambda$ such that if $(X,X')$ is a random sample from
$\gamma$, then $\varphi(X)=\psi(X')$ with probability $1$. So if
$(X,X')$ and $(Y,Y')$ are independent random points from $\gamma$,
then
\[
W(X,Y) = U(phi(X),phi(Y)) = U'(psi(X'),psi(Y')) = W'(X',Y').
\]

To prove that (e)$\Longrightarrow$(d), consider the projections
$\Phi,\Psi:~[0,1]^2\to[0,1]$ defined by $\Phi(x,x')=x$ and
$\Psi(x,x')=x'$. Then
\[
W^{\Phi}((X,X'),(Y,Y'))=W(X,Y)
\]
and
\[
(W')^{\Psi}((X,X'),(Y,Y'))=W'(X',Y')
\]
Thus, $W^{\Phi}=(W')^{\Psi}$ almost everywhere. Furthermore, $\Phi$
and $\Psi$ are measure preserving if we consider the coupling measure
$\gamma$ on $[0,1]$.

Since the completion of $([0,1]^2,\LL_2,\gamma)$ is a Lebesgue space,
we can find a measure preserving map
$\rho:~([0,1],\lambda)\to([0,1]^2,\gamma)$. Setting
$\varphi=\Phi\circ \rho$ and $\psi=\Psi\circ\rho$, we obtain the
desired measure preserving maps $\varphi,\psi:[0,1]\to[0,1]$ such
that $W^\varphi=(W')^\psi$ almost everywhere.

Finally, (d)$\Rightarrow$(a) is trivial.\hfill$\square$

\section*{Acknowledgement}

We are grateful to Mikl\'os Laczkovich, Ron Peled, Yuval Peres and
Oded Schramm for many useful discussions on the topic of this paper,
and to Kati Vesztergombi and Svante Janson for carefully reading an
earlier version and suggesting several improvements.

\section{Appendix: Moments and coupling of probability distributions}

In this section we prove some probability theory lemmas, that are
``well known'' but not easy to reference. We start with the fact that
if two vector valued random variables have the same mixed moments,
then they have the same distribution (cf Feller \cite{Fel}, Problem
XV.9.21.).

\begin{lemma}\label{MOMENTS}
Let $(\Omega,\AA,\pi)$ and $(\Omega',\AA',\pi')$ be probability
spaces, and let $f:~\Omega\to [0,1]^\N$ and $g:~\Omega'\to[0,1]^\N$
be measurable functions, with $f(x)=(f_1(x),f_2(x),\dots)$ and
$g(y)=(g_1(y),g_2(y),\dots)$. If
\[
\int f_1(x)^{k_1}\dots f_n(x)^{k_n}\, d\pi(x) = \int
g_1(y)^{k_1}\dots g_n(y)^{k_n}\, d\pi'(y)
\]
for every finite sequence of nonnegative integers $k_1,\dots,k_n$,
then $\pi(f^{-1}(B))=\pi'(g^{-1}(B))$ for every Borel set $B\subseteq
[0,1]^\N$.
\end{lemma}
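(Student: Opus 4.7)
The plan is to reduce the problem to showing that all finite-dimensional marginal distributions of $f$ and $g$ coincide, and then obtain the coincidence of finite-dimensional marginals from the equality of mixed moments via the Stone--Weierstrass theorem.

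First I would fix $n$ and consider the pushforward measures $\mu_n=\pi\circ(f_1,\dots,f_n)^{-1}$ and $\nu_n=\pi'\circ(g_1,\dots,g_n)^{-1}$ on $[0,1]^n$. The hypothesis says that $\int_{[0,1]^n} x_1^{k_1}\cdots x_n^{k_n}\,d\mu_n=\int_{[0,1]^n} x_1^{k_1}\cdots x_n^{k_n}\,d\nu_n$ for all nonnegative integers $k_1,\dots,k_n$, i.e.\ $\mu_n$ and $\nu_n$ integrate every polynomial identically. By the Stone--Weierstrass theorem the algebra of polynomials is uniformly dense in $C([0,1]^n)$, so $\mu_n$ and $\nu_n$ agree on all continuous functions on the compact metric space $[0,1]^n$. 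Since a finite Borel measure on a compact metric space is determined by its action on continuous functions (Riesz representation, or approximation of indicators of closed sets by continuous functions), this forces $\mu_n=\nu_n$.

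Next I would pass from finite-dimensional marginals to the distributions on $[0,1]^\N$. The Borel $\sigma$-algebra on $[0,1]^\N$ (with the product topology) coincides with the $\sigma$-algebra generated by the finite cylinder sets $B_1\times B_2\times\cdots$ in which $B_i=[0,1]$ for all but finitely many $i$. The family of such cylinders is closed under finite intersections, hence is a $\pi$-system. The equality of finite-dimensional marginals established above gives $\pi(f^{-1}(C))=\pi'(g^{-1}(C))$ for every such cylinder $C$, and Dynkin's $\pi$--$\lambda$ lemma then yields the same equality for every set in the generated $\sigma$-algebra, completing the proof.

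No step is really an obstacle here; the only thing to be careful about is the boundedness, which is given for free since all coordinates take values in $[0,1]$ (so every moment is automatically finite and the moment problem is determinate). If the values were allowed to be unbounded one would need the classical Hamburger/Carleman conditions, but on a compact set Stone--Weierstrass suffices, which is why the lemma is stated for $[0,1]^\N$.
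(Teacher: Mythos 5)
Your proposal is correct, and it follows the same overall outline as the paper's proof — reduce to finite-dimensional cylinder sets, then use polynomial approximation — but the middle step is handled differently, and I think more cleanly. The paper approximates the indicator $\one_{I_j}$ by a polynomial $p_{j,m}$ ``in $L_1$'' on $[0,1]$ and then passes to the limit in $\int p_{1,m}(f_1)\cdots p_{n,m}(f_n)\,d\pi$; this implicitly needs the approximation to be good in $L_1$ of the law of $f_j$, not merely Lebesgue $L_1$, so it requires a small additional argument (e.g.\ choosing $p_{j,m}$ uniformly bounded, converging pointwise off the endpoints, and perturbing the endpoints of $I_j$ to avoid atoms). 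You avoid this entirely: by Stone--Weierstrass the polynomials are \emph{uniformly} dense in $C([0,1]^n)$, so equality of moments forces $\int h\,d\mu_n=\int h\,d\nu_n$ for all $h\in C([0,1]^n)$, and since a finite Borel measure on a compact metric space is determined by its integrals against continuous functions (Riesz), you get $\mu_n=\nu_n$ outright. Your final passage from finite-dimensional marginals to the product Borel $\sigma$-algebra via the cylinder $\pi$-system and Dynkin's lemma is exactly the step the paper leaves as ``it suffices to prove''; making it explicit is fine and adds rigor. In short: same skeleton, but your uniform-approximation/Riesz argument is more robust than the paper's $L_1$ step, and you make the $\pi$--$\lambda$ finish explicit where the paper does not.
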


\begin{proof}
It suffices to prove that $\pi(f^{-1}(B))=\pi'(g^{-1}(B))$ for every
Borel set of the form $B=I_1\times I_2\times \dots I_n\times
[0,1]\times\dots$, where $I_1,\dots,I_n$ are intervals. Let
$p_{j,m}(x)$ be a polynomial that approximates the indicator function
$\one_{I_j}$ on $[0,1]$ in $L_1$ with error less than $1/m$
$(j=1,\dots,n)$.
Then
\begin{align*}
\int\limits_{\Omega} p_{1,m}(f_1(x))\cdots p_{n,m}(f_n(x))\,dx
&\longrightarrow \int\limits_{\Omega} \one_{I_1}(f_1(x))\cdots
\one_{I_n}(f_n(x))\,dx\\
&= \int\limits_{f^{-1}(B)} 1\,dx =\pi(f^{-1}(B)) \qquad (m\to\infty).
\end{align*}
Similarly,
\[
\int\limits_{\Omega} p_{1,m}(g_1(x))\cdots p_{n,m}(g_n(x))\,dx
\longrightarrow \pi(g^{-1}(B)) \qquad (m\to\infty).
\]
But the left hand sides of these two relations are equal for all $m$,
which proves the Lemma.
\end{proof}

We need the following natural fact about coupling.

\begin{lemma}\label{COUPLE-3}
Assume that $(\Omega,\AA,\pi)$ and $(\Omega',\AA,\pi')$ are Lebesgue
spaces, and $(\Gamma,\BB,\rho)$, a countably generated separating
space. Let $f:~\Omega\to\Gamma$ and $g:~\Omega'\to\Gamma$ be measure
preserving maps. Then there exists a coupling $\nu$ of
$(\Omega,\AA,\pi)$ and $(\Omega',\AA,\pi')$ such that
\[
\nu\bigl\{(x,y):~f(x)=g(y)\bigr\}=1.
\]
\end{lemma}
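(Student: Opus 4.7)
The plan is to build $\nu$ as a fiber product of $\pi$ and $\pi'$ over the common image measure $\rho$. Because $(\Omega,\AA,\pi)$ is Lebesgue and $f:\Omega\to\Gamma$ is measure preserving with $\BB$ countably generated, the partition of $\Omega$ into the fibres $\{f^{-1}(t):t\in\Gamma\}$ is a measurable partition in Rohlin's sense (it is generated by the countable family $\{f^{-1}(B_n)\}$, where $\{B_n\}$ is a countable generator of $\BB$). I would therefore invoke Rohlin's disintegration theorem from \cite{Roh} to obtain a family $\{\pi_t\}_{t\in\Gamma}$ of probability measures on $\Omega$ satisfying
\[
\pi_t\bigl(f^{-1}(t)\bigr)=1\ \text{for }\rho\text{-a.e.\ }t,\quad t\mapsto\pi_t(A)\text{ is }\BB\text{-measurable},\quad \pi(A)=\int_\Gamma\pi_t(A)\,d\rho(t),
\]
and similarly a family $\{\pi'_t\}$ associated to $g$.

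Using these disintegrations I would define
\[
\nu(A\times A')=\int_\Gamma\pi_t(A)\,\pi'_t(A')\,d\rho(t)
\]
on rectangles in $\AA\times\AA'$ and extend to a probability measure on $\AA\times\AA'$ by a standard monotone class argument. Setting $A'=\Omega'$ and using $\pi'_t(\Omega')=1$ for $\rho$-almost every $t$ gives $\nu(A\times\Omega')=\pi(A)$; symmetrically $\nu(\Omega\times A')=\pi'(A')$, so $\nu$ is a coupling of $\pi$ and $\pi'$.

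For the support condition, I first need that $D=\{(x,y):f(x)=g(y)\}$ is $(\AA\times\AA')$-measurable. Any countable generator $\SS=\{B_n\}$ of $\BB$ must itself separate points (fixing $x\neq y$ in $\Gamma$, the sets that fail to separate the pair form a sub-$\sigma$-algebra, so if no $B_n$ separated them then neither would any set in $\BB$), hence the diagonal satisfies $\Delta=\bigcap_n\bigl((B_n\times B_n)\cup(B_n^c\times B_n^c)\bigr)\in\BB\times\BB$, and $D=(f\times g)^{-1}(\Delta)$. For $\rho$-almost every $t$, the product $\pi_t\times\pi'_t$ is concentrated on $f^{-1}(t)\times g^{-1}(t)\subseteq D$, so $\nu(D)=\int_\Gamma(\pi_t\times\pi'_t)(D)\,d\rho(t)=1$, as required.

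The main obstacle is justifying the disintegration, given that $(\Gamma,\BB,\rho)$ is only assumed countably generated and separating, not itself Lebesgue. The point I would emphasise is that Rohlin's theorem is really a statement about a measurable partition on the Lebesgue \emph{domain} $\Omega$: the fibre partition of $f$ is generated by the countable collection $\{f^{-1}(B_n)\}$ and is therefore Rohlin-measurable, so the disintegration exists regardless of any extra structure on $\Gamma$. Everything else in the argument is routine measure-theoretic bookkeeping.
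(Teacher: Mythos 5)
Your proposal is correct and arrives at the same coupling measure $\nu$ as the paper, but by a genuinely different route. The paper defines, for $A\in\AA$, the Radon--Nikodym derivative $f^A=d\lambda^A/d\rho$ of $\lambda^A(B)=\pi(A\cap f^{-1}(B))$ with respect to $\rho$ (and similarly $g^C$), and sets $\nu(A\times C)=\int f^A g^C\,d\rho$. That is exactly your $\int_\Gamma\pi_t(A)\pi'_t(C)\,d\rho(t)$, since $t\mapsto\pi_t(A)$ is a version of $f^A$. The difference is in what comes next: the paper \emph{does not} invoke Rohlin's disintegration theorem; instead it realizes the Lebesgue spaces $\Omega$ and $\Omega'$ as intervals together with countably many atoms, approximates rectangles by open and closed sets, and uses compactness to show $\sigma$-additivity of $\nu$ directly, after which the Measure Extension Theorem finishes the construction. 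You instead import the heavy lifting from Rohlin: disintegration gives you genuine probability measures $\pi_t,\pi'_t$, so $\sigma$-additivity of $\nu$ is automatic by monotone convergence. Your verification of the support condition (for $\rho$-a.e.\ $t$ the product $\pi_t\times\pi'_t$ charges only $f^{-1}(t)\times g^{-1}(t)$) is also cleaner than the paper's, which sums $\nu$-measure-zero estimates over the countable separating family.

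One point you flag yourself deserves to be taken seriously: Rohlin's theorem, as usually stated, disintegrates over the factor space $\Omega/\xi$ with its full quotient $\sigma$-algebra $\AA_\xi$, not over $(\Gamma,\BB,\rho)$. You need the index set to be $\Gamma$ so that $\pi_t$ and $\pi'_t$ live over a common base. This does work: since $\xi$ is generated by the countable family $\{f^{-1}(B_n)\}$, the factor space is separable and $\AA_\xi$ coincides mod $0$ with the pull-back of $\BB$ under the quotient map $q:\Omega/\xi\to\Gamma$; moreover $\{t\}\in\BB$ for every $t$ because $\SS$ is separating, so the statement ``$\pi_t(f^{-1}(t))=1$'' is meaningful. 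But these points need at least a sentence, since they are precisely where the hypotheses (Lebesgue domain, countably generated and separating target) enter. The paper's more hands-on construction sidesteps the transfer-of-index issue entirely, at the cost of a longer compactness argument; your version is shorter to write but leans on a stronger black box and a transfer step you should make explicit.
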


\begin{proof}
For $A\in\AA$, consider the measure $\lambda^A(B)=\pi(A\cap
f^{-1}(B))$ defined for $B\in\BB$, and its Radon-Nikodym derivative
$f^A=d\lambda^A/d\rho$. Since $\lambda^A\le\pi(f^{-1}(B))=\rho(B)$,
this derivative exists, and $0\le f^A\le 1$ almost everywhere.
Furthermore, $f^\emptyset=0$ and $f^\Omega=1$ almost everywhere.

\killtext{It is also easy to check that
\begin{equation}\label{FFB}
f^{f^{-1}(B)\cap A}=f^{A}\one_B
\end{equation}
almost everywhere for all $B\in\BB$ and $A\in\AA$. From \eqref{FFB}
it is easy to derive that
\begin{equation}\label{NUBC}
\nu(f^{-1}(B)\times C)= \pi'(f^{-1}(B)\cap C),
\end{equation}
and in particular,
\begin{equation}\label{NUBB}
\nu(f^{-1}(B)\times g^{-1}(B))= \rho(B).
\end{equation}
}

Similarly, for $C\in\AA'$, define $\mu^C(B)=\pi'(C\cap g^{-1}(B))$
and $g^C=d\mu^C/d\rho$. Finally, let
\begin{equation}\label{COMP-DEF}
\nu(A\times C)=\int f^Ag^C\,d\rho.
\end{equation}
Clearly
\[
\nu(A\times C)\le \int f^A\,d\rho = \pi(A),
\]
and similarly $\nu(A\times C)\le \pi'(C)$. Hence in particular
$\nu(A\times C)=0$ if either $\pi(A)=0$ or $\pi'(C)=0$.

\begin{claim}\label{CL:ADD}
If $A_i\in\AA$, $C_i\in\AA'$ ($i\in I$) and the sets $A_i\times C_i$
form a (finite or countably infinite) partition of $A\times C$
($A\in\AA$, $C\in\AA'$), then $\sum_i \nu(A_i\times C_i)=\nu(A\times
C)$.
\end{claim}

It is easy to see that if $A_1,A_2\in\AA$ are disjoint sets and
$A=A_i\cup A_2$, then $f^{A_1}+f^{A_2}=f^A$ almost everywhere. It
follows that for every $C\in\AA'$, we have $\nu(A_1\times
C)+\nu(A_2\times C)=\nu(A\times C)$. This implies by standard
arguments that the claim holds if $|I|$ is finite. This in turn
implies that $\nu$ extends to a finitely additive measure on the
algebra $\FF$ of sets that can be written as the union of a finite
number of product sets $A\times C$ ($A\in\AA$, $C\in\AA'$).

In the case of infinite $|I|$, it follows that $\sum_i \nu(A_i\times
C_i)\le \nu(A\times C)$; in fact, for every finite $J\subseteq I$, we
have $\cup_{i\in J} A_i\times C_i \subseteq A\times C$, and hence by
the finite additivity of $\nu$, we have
\[
\sum_{i\in J}\nu(A_i\times C_i)=\nu\Bigl(\cup_{i\in J} A_i\times
C_i\Bigr) \le \nu(A\times C).
\]
Since this holds for every finite subset $J$ of $I$, it also holds
for $I$.

Suppose that there is a partition where $\{A_i\times C_i:~i=1\in\N\}$
of $A\times C$ and an $\eps>0$ for which
\[
\sum_i \nu(A_i\times C_i) < \nu(A\times C)-4\eps
\]
on a set $B$ of positive measure. Now we use that $(\Omega,\AA,\pi)$
and $(\Omega',\AA,\pi')$ are Lebesgue spaces, so we may assume that
they are intervals $[0,a]$ and $[0,b]$ respectively, together with a
countable set of atoms. Thinking of the atoms as converging to $a$
from above, we have a compact topology on them. For every $i$, we can
find an open sets $U_i\supseteq A_i$ and $V_i\supseteq C_i$ such that
$\pi(U_i)\le\pi(A_i)+\eps 2^{-i}$ and
$\pi'(V_i)\le\pi'(C_i)+\eps/2^i$. Also, we can find closed sets
$U\subseteq A$ and $V\subseteq C$ such that $\pi(U)\ge\pi(A)-\eps$
and $\pi'(V)\ge\pi'(C)-\eps$. Then
\begin{align*}
\nu(U_i\times V_i) &\le \nu(A_i\times C_i)+\nu((U_i\setminus
A_i)\times C_i) + \nu(U_i\times (V_i\setminus C_i))\\
&\le \nu(A_i\times C_i) +\pi(U_i\setminus A_i)+\pi'(V_i\setminus
C_i)\le\nu(A_i\times C_i) +2\eps 2^{-i}.
\end{align*}
It follows similarly that
\[
\nu(U\times V) \ge\nu(A\times C) -2\eps.
\]
Hence
\[
\sum_i \nu(U_i\times V_i) \le \sum_i\nu(A_i\times C_i)+2\eps <
\nu(A\times C)-2\eps\le\nu(U\times V).
\]
The open sets $U_i\times V_i$ cover the compact set $U\times V$, and
so a finite number of them also covers. But the contradicts the
finite additivity of $\nu$ which we already established.

\begin{claim}\label{CL:MEAS}
The setfunction $\nu$ extends to a measure on $\AA\times\AA'$.
\end{claim}

We have seen already that $\nu$ extends to $\FF$; it follows by Claim
\ref{CL:ADD} that this extension is $\sigma$-additive. Thus the Claim
follows by the Measure Extension Theorem.

Define $\Delta=\{(x,y)\in \Omega\times\Omega': f(x)=g(y)\}$. To
complete the proof of the Lemma, we want to prove that $\nu$ is a
coupling between $(\Omega,\AA,\pi)$ and $(\Omega',\AA,\pi')$ (which
is trivial), and that $\nu(\Omega\times\Omega'\setminus \Delta)=0$.
Let $\SS\subseteq\BB$ be a countable family separating the elements
of $\Gamma$. Then
\[
\Omega\times\Omega'\setminus \Delta = \bigcup_{S\in\SS}
f^{-1}(S)\times g^{-1}(\Gamma\setminus S) \cup\bigcup_{S\in\SS}
f^{-1}(\Gamma\setminus S)\times g^{-1}(S).
\]
Consider any term here, say $f^{-1}(S)\times g^{-1}(\Gamma\setminus
S) = A\times C$. Then
\[
\nu(A\times C)=\int f^A g^C\,d\rho = \int_S + \int_{\Gamma\setminus
S}.
\]
Here
\[
\int_S f^A g^C\,d\rho \le \int_S g^C\,d\rho =
\mu^C(S)=\pi'(g^{-1}(\Gamma\setminus S) \cap g^{-1}(S))=0,
\]
and similarly
\[
\int_{\Omega\setminus S} f^Ag^C\,d\rho = 0.
\]
This proves that $\nu(\Omega\times\Omega'\setminus \Delta)=0$.
\end{proof}


\begin{thebibliography}{99}

\bibitem{AFKK}
N.~Alon, W.~Fernandez de la Vega, R.~Kannan and M.~Karpinski: Random
sampling and approximation of MAX-CSPs, {\it J. Comput. System Sci.}
{\bf 67} (2003), 212--243.

\bibitem{BCLSV-unpub}
C.~Borgs, J.~Chayes, L.~Lov\'asz, V.T.~S\'os, K.~Vesztergombi,
unpublished, 2004.

\bibitem{BCLSV-rev}
C.~Borgs, J.~Chayes, L.~Lov\'asz, V.T.~S\'os, K.~Vesztergombi:
Counting graph homomorphisms, in: {\it Topic in Discrete Mathematics}
(Klazar, Kratochvil, Loebl, Matousek, Thomas, Valtr, eds.) Springer,
Berlin--Heidelberg (2006), 315--371.

\bibitem{dense1}
C.~Borgs, J.~Chayes, L.~Lov\'asz, V.T.~S\'os, K.~Vesztergombi:
Convergent Sequences of Dense Graphs I: Subgraph Frequencies, Metric
Properties and Testing, preprint (2006),
\\
\url{http://research.microsoft.com/~borgs/Papers/ConvMetric.pdf}

\bibitem{dense2}
C.~Borgs, J.T.~Chayes, L.~Lov\'asz, V.T.~S\'os, and K.~Vesztergombi:
Convergent Graph Sequences II. $H$-Colorings,
Statistical Physics and Quotients, manuscript (2006)

\bibitem{ELS}
P.~Erd\"os, L.~Lov\'asz, J.~Spencer: Strong independence of graphcopy
functions, in: {\it Graph Theory and Related Topics}, Academic Press,
165-172.

\bibitem{Fel}
Feller, W.: {\it An Introduction to Probability Theory and its
Applications,} Vol. II, Second edition, Wiley, NewYork, (1971).

\bibitem{FLS}
M.~Freedman, L.~Lov\'asz, A.~Schrijver: Reflection positivity, rank
connectivity, and homomorphism of graphs {\it Journal of The American
Mathematical Society} (to appear)

\bibitem{GGR}
O.~Goldreich, S.~Goldwasser and D.~Ron: Property testing and its
connection to learning and approximation, {\it J. ACM} {\bf 45}
(1998), 653--750.

\bibitem{Hal}
P.R.~Halmos: {\it Measure Theory}, Graduate Texts in Mathematics
{\bf 18}, Springer, New York, Heidelberg, Berlin (1991).

\bibitem{LSz}
L.~Lov\'asz and B.~Szegedy: Limits of dense graph sequences,
Microsoft Research Technical Report TR-2004-79.

\bibitem{LSz2}
L.~Lov\'asz, B.~Szegedy: Szemer\'edi's Lemma for the analyst,
Microsoft Research Technical Report MSR-TR-2005-90,

\url{ftp://ftp.research.microsoft.com/pub/tr/TR-2005-90.pdf}

\bibitem{Roh}  V. A. Rohlin: On the fundamental ideas of measure theory,
Translations of the American Mathematical Society, Series 1, Vol. 10, 1-54 (1962).
The Russian original appeared in Mat. Sb. 25, 107-150 (1949).

\bibitem{Wil}
D.~Williams: {\it Probability with Martingales}, Cambridge University
Press, (1991).

\end{thebibliography}
\end{document}